\numberwithin{equation}{section}
\def\diag{{\rm diag}}
\newtheorem{Theorem}{Theorem}[section]
\newtheorem{Lemma}{Lemma}[section]
\newtheorem{Remark}{Remark}[section]
\DeclareMathOperator{\rank}{rank}
\newcommand{\bsmat}{\left[\begin{smallmatrix} }
\newcommand{\esmat}{\end{smallmatrix}\right] }
\title{On Some Inverse Eigenvalue Problems of Quadratic Palindromic Systems
\footnotemark[1]}
\author{
Yunfeng Cai
\footnotemark[2]
\and
Jiang Qian
\footnotemark[3]
}
\date{}
\begin{document}
\maketitle
\renewcommand{\thefootnote}{\fnsymbol{footnote}}

\footnotetext[1]{This research was supported in part by NSFC under grant 11301013.}
\footnotetext[2]{LMAM \& School of Mathematical Sciences,
Peking University, Beijing, 100871, P.R. China (yfcai@math.pku.edu.cn)}
\footnotetext[3]{School of Sciences, Beijing University of Posts and Telecommunications, Beijing, 100876, P.R. China (jqian104@126.com)}

\renewcommand{\thefootnote}{\arabic{footnote}}

\begin{abstract}
This paper concerns some inverse eigenvalue problems of the quadratic $\star$-(anti)-palindromic system
$Q(\lambda)=\lambda^2 A_1^{\star}+\lambda A_0 + \epsilon A_1$,
where $\epsilon=\pm 1$, $A_1, A_0 \in \mathbb{C}^{n\times n}$, $A_0^{\star}=\epsilon A_0$, $A_1$ is nonsingular,
and the symbol $\star$ is used as an abbreviation for transpose for real matrices and 
either transpose or conjugate transpose for complex matrices. By using the spectral decomposition of the quadratic $\star$-(anti)-palindromic system, the inverse eigenvalue problems with entire/partial eigenpairs given, and the model updating problems with no-spillover are considered. Some conditions on the solvabilities of these problems are given, and algorithms are proposed to find these solutions. These algorithms are illustrated by some numerical examples.
\end{abstract}

\vskip 2mm

\textbf{Key words.} inverse eigenvalue problem, quadratic palindromic system, spectral decomposition

\vskip 2mm
\textbf{AMS subject classifications.} 15A24, 15A29,  65F18

\pagestyle{myheadings}
\thispagestyle{plain}
\markboth{YUNFENG CAI AND JIANG QIAN}{INVERSE EIGENVALUE PROBLEMS OF PALINDROMIC SYSTEMS}

\section{Introduction}\label{introduction}
 
In this paper, we consider the following quadratic $\star$-(anti)-palindromic eigenvalue problem:
\begin{equation}\label{eq:qep}
Q(\lambda)x=0, 
\end{equation}
where 
\begin{align}\label{pal}
Q(\lambda)=\lambda^2 A_1^{\star}+\lambda A_0 + \epsilon A_1,
\end{align}
$\epsilon=\pm 1$, $A_1, A_0 \in \mathbb{C}^{n\times n}$, $A_0^{\star}=\epsilon A_0$, $A_1$ is nonsingular,
and the symbol $\star$ is used as an abbreviation for transpose for real matrices and either transpose or conjugate transpose for complex matrices.
The scalar $\lambda$ and nonzero vector $x$ satisfying \eqref{eq:qep} 
are called an eigenvalue of the quadratic eigenvalue problem (QEP) and the (right) eigenvector of the QEP corresponding to $\lambda$, respectively.
The eigenvalue $\lambda$ together with the corresponding eigenvector $x$, $(\lambda, x)$ is called an eigenpair of the QEP. To be specific, we summarize the names and structures of the palindromic system $Q(\lambda)$ in \eqref{pal} as follows:
\begin{subequations}\label{4pal}
\begin{align}
&\top\mbox{-palindromic} \quad 
&Q(\lambda)&=\lambda^2 A_1^{\top} +\lambda A_0 + A_1,  \quad A_0^{\top}=A_0,\\
&\ast\mbox{-palindromic} \quad 
&Q(\lambda)&=\lambda^2 A_1^{*} +\lambda A_0 + A_1,  \quad A_0^{*}=A_0,\\
&\top\mbox{-anti-palindromic} \quad 
&Q(\lambda)&=\lambda^2 A_1^{\top} +\lambda A_0 - A_1,  \quad A_0^{\top}=-A_0,\\
&\ast\mbox{-anti-palindromic} \quad
&Q(\lambda)&=\lambda^2 A_1^{*} +\lambda A_0 - A_1,  \quad A_0^{*}=-A_0.
\end{align}
\end{subequations}

The phrase ``$\star$-(anti)-palindromic'' is due to the fact that
reversing the order of coefficient matrices of $Q(\lambda)$, 
and followed by taking the (conjugate) transpose and multiplying by $\epsilon$, 
the matrix polynomial becomes nothing but the original one, i.e.,
$Q(\lambda)=\epsilon \lambda^2 Q(1/\lambda)^{\star}$.
Then it follows that eigenvalues of $Q(\lambda)$ occur in pairs $(\lambda,1/\lambda^{\star})$,
or in quadruples $(\lambda,\bar{\lambda},1/{\lambda},1/\bar{\lambda})$ when $A_1$ are $A_0$ are real.
This property is sometimes referred to as ``symplectic spectral symmetry'', 
since symplectic matrices exhibit this behavior.
Moreover, the (algebraic, geometric, and partial) multiplicities of eigenvalues in each pair/quadruple are equal \cite{mackey2006structured}.

The $\star$-(anti)-palindromic QEPs arise in the analysis and numerical solution of high order systems of ordinary and partial differential equations, 
and enjoy a variety of applications.
For example, a $\top$-palindromic QEP is raised in the study of rail traffic noise caused by high speed trains \cite{hilliges2004numerische,hilliges2004solution},
and of the behavior of periodic surface of acoustic wave filters \cite{zaglmayr2002eigenvalue};
an $*$-palindromic QEP arises in the computation of Crawford number by bisection and level set methods \cite{higham2002detecting}.
For more examples, we refer readers to \cite{mackey2006structured,chu2010palindromic,huang2011palindromic} and references therein.


The QEP is usually solved via a two-stage procedure: first, transform the QEP into a general eigenvalue problem (GEP) via {\em linearization}; second, apply certain numerical methods for the GEP, such as the QZ method \cite{van1996matrix, moler1973algorithm}, etc. However, the symplectic spectrum symmetry is in general destroyed, producing large numerical errors \cite{ipsen2004accurate,chu2008vibration,huang2009structure}.
Great efforts have been made to the development of numerical methods which preserve the symplectic spectrum symmetry. In \cite{mackey2006structured, mackey2006vector}, the $\star$-(anti)-palindromic polynomial eigenvalue problem is linearized into the form $\lambda Z^{\star} + Z$, which naturally preserves the symplectic spectrum symmetry and enables the developments of structure preserving numerical methods, for example, the QR-like method \cite{schroder2007qr}, the hybrid method computing the anti-triangular Schur form \cite{mackey2009numerical} and the URV decomposition based structured method \cite{schroder2007urv}. Based on a symplectic linearization, a structure preserving doubling algorithm is developed in \cite{li2011palindromic} for linear palindromic eigenvalue problem. In \cite{chu2008vibration,huang2009structure,chu2012structured}, the structure preserving doubling algorithms are discussed for the $\star$-(anti)-palindromic QEPs.

The inverse eigenvalue problem (IEP), on the other hand, concerns determining the coefficient matrices
with partial or entire eigenvalues (and eigenvectors) prescribed.
The IEP is of great importance and challenging, many efforts have been devoted to the IEP of quadratic systems, especially quadratic symmetric systems.
There is quite a long list of studies on this subject, 
see for example,  \cite{friswell1995finite, lancaster2005isospectral,lancaster2005inverse, kuo2006new, lancaster2007inverse, cai2009solutions, cai2009quadratic,chu2009spectral,cai2010new, jia2011real,mao2014structure} and references therein. The IEP of $\star$-(anti)-palindromic systems with entire eigenvalues given is considered in \cite{Al-Ammari2011}, which is based on the spectral decomposition of $\star$-(anti)-palindromic systems \cite{lancaster2007canonical, Al-Ammari2011}.

In this paper, we consider some other types of IEP of quadratic $\star$-(anti)-palindromic systems \eqref{pal},
namely, the IEP of quadratic $\star$-(anti)-palindromic systems with $k$ prescribed eigenpairs (IEP-QP($k$)), where $1\le k \le 2n$, and the model updating problems with no-spillover of quadratic $\star$-(anti)-palindromic systems (MUP-QP).

The IEP of quadratic symmetric systems with $k$ prescribed eigenpairs is discussed in \cite{chu2004inverse,kuo2006solutions} for $1\le k \le n$,
under the assumption that the eigenvector matrix, formed by the prescribed eigenvectors, is of full column rank.
For the case when $n<k\le 2n$, a general solution is given in \cite{cai2009solutions} assuming that the eigenvector matrix is of full row rank.
Special attention is paid to the IEP of quadratic symmetric systems with entire eigenpairs given in \cite{cai2009quadratic}, 
where under some proper assumptions,
 it is shown that when the solution is not unique up to a scalar, 
the coefficient matrices of the resulting system can be jointly block diagonalized via a congruence transformation. 

In this paper, the IEP of quadratic $\star$-(anti)-palindromic systems with $k$ prescribed eigenpairs (IEP-QP($k$)) is considered in section~\ref{app}. We first propose a necessary and sufficient condition on the existence of regular solutions to the IEP-QP($2n$). It is also shown that under certain conditions,  
the coefficient matrices of the solutions to the IEP-QP($2n$) can be jointly block diagonalized.
We then propose an algorithm to solve the IEP-QP$(k)$ for general $k$,
based on the spectral decomposition of the  $\star$-(anti)-palindromic QEP.
Different from the approaches on solving the IEP for symmetric systems in literature, we give solutions to the IEP-QP($k$) uniformly, 
without distinguishing the cases of $ k\le n$ and $k>n$, and we do not need to assume that the eigenvector matrix is of full column or row rank.

The model updating problem with no-spillover (MUP) is to replace some unwanted eigenvalues of the original system by some desired ones, meanwhile keeping the remaining eigenpairs unchanged (the so called no-spillover phenomenon). There exist some references on the MUP for quadratic symmetric systems. In \cite{carvalho2006symmetry}, the unwanted eigenvalues is replaced one by one or one complex conjugate pair by pair. However, this approach
may suffer from breakdown before all desired eigenvalues are updated.
Later in \cite{chu2009spectral}, the unwanted eigenvalues are replaced simultaneously.
Parametric solutions are given in \cite{cai2010new},
in which the parameters can be further exploited to optimize the updated system in some sense. 
In these approaches for symmetric systems, the number of eigenvalues to be replaced is restricted to be no more than $n$, and the range space spanned by those desired eigenvectors of the updated system is assumed to be in that of those eigenvectors to be replaced of the original system. Otherwise, they fail to output any solution.

In this paper, the MUP of quadratic $\star$-(anti)-palindromic systems (MUP-QP) is considered in section~\ref{eep}.
Parametric solutions are given for two cases:
 the eigenvectors of the updated system corresponding to the updated eigenvalues are prescribed, and not prescribed. These solutions are given without the restrictions that the number of unwanted eigenvalues must be  no more than $n$, and and the range space spanned by those desired eigenvectors of the updated system must be in that of those eigenvectors to be replaced of the original system.
%

The rest of this paper is organized as follows.
In section~\ref{sd}, we present the spectral decomposition of the $\star$-(anti)-palindromic QEP, expressing the coefficient matrices in terms of the standard pair, and a parameter matrix, whose structure is characterized in
section~\ref{ss}. The spectral decomposition is used to solve some QIEPs:
the IEP-QP$(k)$ and the MUP-QP in sections~\ref{app} and \ref{eep}, respectively. Some numerical examples are presented in section~\ref{example} to illustrate the performance of the algorithms proposed in sections~\ref{app} and \ref{eep}. Some concluding remarks are finally drawn in section~\ref{conclusion}.

\section{Preliminaries}\label{sd}
In this section, we present some preliminaries results which will be used to solve the inverse eigenvalue problems of quadratic palindromic systems. 

\subsection{Spectral decomposition}
We start with the spectral decomposition theory of the $\star$-(anti)-palindromic QEP, expressing the coefficient matrices $A_1$ and $A_0$ in terms of a standard pair $(X, T)$ together with a parameter matrix $S$. Here a matrix pair $(X,T)\in\mathbb{C}^{n\times 2n}\times \mathbb{C}^{2n\times 2n}$ is called a {\em standard pair} of  $Q(\lambda)$ \eqref{pal} if the matrix $W=W(X,T):=\bsmat X\\  -XT^{-1} \esmat$ is nonsingular and 
\begin{equation}\label{eq:qepmatright}
A_1^{\star}XT^2+A_0XT+\epsilon A_1X=0.
\end{equation}
If all eigenvalues of $Q(\lambda)$ \eqref{pal} are semi-simple, we may collect all eigenvalues in a diagonal matrix $\Lambda$ and corresponding $2n$ eigenvectors in $U$, and then $(U,\Lambda)$ serves as a standard pair of $Q(\lambda)$ \eqref{pal}. However if some eigenvalue of $Q(\lambda)$ \eqref{pal} is defective, $Q(\lambda)$ does not have a complete set of $2n$ eigenvectors. In this case, the standard pair is still well defined, and similar as Lemma 2.2 in \cite{chu2009spectral}, the eigenvalues and eigenvectors of $Q(\lambda)$ can be completed attainable by the standard pair.

For a matrix pair $(X,T)\in\mathbb{C}^{n\times k}\times \mathbb{C}^{k\times k}$,
define
\begin{align}
\mathbb{S}_T&=\{S\in\mathbb{C}^{ k \times k} \; \big| \; S^{\star} = -\epsilon S, S=T S T^{\star} \},\\
\mathbb{S}_{(X,T)}&=\{S\in\mathbb{S}_T \; \big| \; X S X^{\star}=0 \}.
\end{align}
Then the spectral decomposition of the $\star$-(anti)-palindromic QEP is characterized in the following theorem.

\begin{Theorem}\label{thm:sp}
Let $(X,T)$ be a standard pair of a regular $\star$-(anti)-palindromic system $Q(\lambda)$.
Denote
\begin{equation}\label{s}
S=(W^{\star} L J_{\epsilon} L^{\star} W)^{-1},
\end{equation}
where 
\begin{align}\label{WLJ}
W=\begin{bmatrix} X\\ -XT^{-1}\end{bmatrix},\qquad 
L=\begin{bmatrix}0 & I \\ A_1^{\star} & 0\end{bmatrix},\qquad
J_{\epsilon}=\begin{bmatrix}0 & I_n \\ - \epsilon I_n & 0\end{bmatrix}.
\end{align}
Then $S\in\mathbb{S}_{(X,T)}$, and 
the coefficient matrices $A_1$, $A_0$
can be represented in terms of $X$, $T$ and $S$ as:
\begin{equation}\label{a1a0}
A_1=\epsilon (XT^{-1}SX^{\star})^{-1},\qquad A_0=- A_1XT^{-2}SX^{\star}A_1.
\end{equation}
\end{Theorem}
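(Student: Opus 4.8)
The plan is to work with the linearization $\mathcal{L}(\lambda) = \lambda L^{\star} + L$ implicit in the matrices $L$ and $J_\epsilon$ above, and to exploit the defining relation \eqref{eq:qepmatright} of a standard pair together with nonsingularity of $W = W(X,T)$. First I would record the basic algebraic identities that a standard pair satisfies: from $A_1^{\star}XT^2 + A_0 XT + \epsilon A_1 X = 0$ one gets, after right-multiplication by $T^{-1}$ and $T^{-2}$, two companion-type relations, and these can be packaged as $L W = \mathcal{M} W T^{-1}$ (or a similar block identity) for an appropriate block matrix $\mathcal{M}$ built from $A_1, A_0$; the point is that $W$ intertwines $T^{-1}$ with the pencil data. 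Using the $\star$-(anti)-palindromic structure $A_0^{\star} = \epsilon A_0$ one checks that $L J_\epsilon L^{\star}$ is a $\star$-(skew)symmetric matrix of a clean block form, so that $S = (W^{\star} L J_\epsilon L^{\star} W)^{-1}$ is well defined (this is where regularity and nonsingularity of $W$ and $A_1$ enter) and inherits the symmetry $S^{\star} = -\epsilon S$.

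Next I would verify $S = T S T^{\star}$. The natural route is to show that $M := W^{\star} L J_\epsilon L^{\star} W$ satisfies $T^{\star} M T = M$, equivalently $M = (T^{\star})^{-1} M T^{-1}$; this should follow by substituting the companion relations for $W$ and using that $J_\epsilon$ is (up to sign) its own inverse-transpose, so that conjugating $L J_\epsilon L^{\star}$ by the shift built into $W$ reproduces $M$ after accounting for the $T$-factors. Inverting then gives $S = T S T^{\star}$, so $S \in \mathbb{S}_T$. For the membership $S \in \mathbb{S}_{(X,T)}$, i.e. $X S X^{\star} = 0$, I would compute $XSX^{\star}$ directly: writing $W = \bsmat X \\ -XT^{-1}\esmat$, the first block row of $W^{-1}$ has a known description (it is read off from $W W^{-1} = I$), and $X S X^{\star} = X M^{-1} X^{\star}$ becomes a product that collapses because $L J_\epsilon L^{\star}$ has a zero block in the right position — concretely, the $(1,1)$ block of $L J_\epsilon L^{\star}$ vanishes, which forces the corresponding block of $M^{-1}$, when sandwiched by the first block row of $W$, to be zero.

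For the formulas \eqref{a1a0}, I would invert the relation \eqref{s} to get $W^{\star} L J_\epsilon L^{\star} W = S^{-1}$, then read off individual blocks. Multiplying out $L J_\epsilon L^{\star}$ explicitly using \eqref{WLJ} gives a $2\times 2$ block matrix whose entries are $I_n$, $\epsilon A_1$, $A_1^{\star}$ and an $A_0$-term; sandwiching by $W$ and $W^{\star}$ and extracting, say, the $(1,1)$ and $(1,2)$ blocks of the resulting identity yields two equations relating $X, T, S$ to $A_1$ and to $A_0 A_1^{-1}$ (or $A_1 X T^{-2} S X^{\star} A_1$). Solving the first for $A_1$ gives $A_1 = \epsilon (X T^{-1} S X^{\star})^{-1}$ — here one must separately argue $X T^{-1} S X^{\star}$ is invertible, which follows since $W$ and $S^{-1}$ are invertible and the relevant block of $W^{\star}(\cdot)W$ is exactly $\pm X T^{-1} S X^{\star}$ up to known factors — and substituting back into the second equation produces $A_0 = -A_1 X T^{-2} S X^{\star} A_1$.

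The main obstacle I anticipate is purely bookkeeping: keeping the $2n \times 2n$ block computations consistent, in particular tracking the signs and the $\star$ versus transpose distinction ($\star$ is conjugate-linear in the $\ast$ case, so $T^{\star}$, $X^{\star}$ must be handled carefully when $T$ has complex entries), and correctly identifying which block of $W^{-1}$ to use when passing between $W^{\star} L J_\epsilon L^{\star} W = S^{-1}$ and the extraction of $A_1, A_0$. Everything reduces to (i) the companion identities coming from \eqref{eq:qepmatright}, (ii) the explicit $2\times 2$ block product $L J_\epsilon L^{\star}$, and (iii) invertibility bookkeeping; no deep new idea beyond the standard-pair machinery is needed.
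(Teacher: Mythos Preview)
Your strategy is essentially the paper's: package the standard-pair relation as a companion identity, exploit the block form of $LJ_\epsilon L^{\star}$, and read off both $S\in\mathbb{S}_{(X,T)}$ and the formulas for $A_1,A_0$. Two specific points in your plan will not go through as written, though.

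First, the direction of the sandwiching. You propose to extract blocks from $W^{\star}LJ_\epsilon L^{\star}W = S^{-1}$, but this is a single $2n\times 2n$ matrix equation with no intrinsic block partition: $S^{-1}$ has no distinguished $n\times n$ blocks. The identity you actually want is the inverted one,
\[
WSW^{\star} = (LJ_\epsilon L^{\star})^{-1}
= \begin{bmatrix} 0 & A_1^{-\star}\\ -\epsilon A_1^{-1} & 0\end{bmatrix},
\]
because on the \emph{left} the block structure of $W=\bsmat X\\ -XT^{-1}\esmat$ gives the $(1,1)$ block as $XSX^{\star}$ and the $(2,1)$ block as $-XT^{-1}SX^{\star}$; matching with the right-hand side then yields $XSX^{\star}=0$ and $A_1=\epsilon(XT^{-1}SX^{\star})^{-1}$ directly. (Your remark about ``the $(1,1)$ block of $LJ_\epsilon L^{\star}$ vanishing'' is close, but what you actually need is that the $(1,1)$ block of its \emph{inverse} vanishes; this follows because $LJ_\epsilon L^{\star}$ is block anti-diagonal.) Relatedly, you do not need ``the first block row of $W^{-1}$'': what is used is simply that $X=[I\ \ 0]\,W$, so $XSX^{\star}=[I\ \ 0]\,WSW^{\star}\,[I\ \ 0]^{\star}$.

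Second, your plan for $A_0$ cannot work as stated: a quick computation gives $LJ_\epsilon L^{\star}=\bsmat 0 & -\epsilon A_1\\ A_1^{\star} & 0\esmat$, which involves $A_1$ only, so no block of $WSW^{\star}=(LJ_\epsilon L^{\star})^{-1}$ (or of $S^{-1}$) will produce an equation containing $A_0$. The paper recovers $A_0$ by a separate one-line step: once $A_1$ is known, postmultiply the defining relation $A_1^{\star}XT^2+A_0XT+\epsilon A_1X=0$ by $T^{-2}SX^{\star}$ and use $XSX^{\star}=0$ together with $XT^{-1}SX^{\star}=\epsilon A_1^{-1}$ to solve for $A_0$. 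With these two corrections your outline matches the paper's proof.
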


\begin{proof}
Since $(X,T)$ is a standard pair of the regular $\star$-(anti)-palindromic system $Q(\lambda)$, both $W$ and $L$ defined in \eqref{WLJ} are nonsingular. So $S$ in \eqref{s} is well defined, and obviously satisfies $S^{\star} = -\epsilon S$, since $J_{\epsilon}^{\star} = -\epsilon J_{\epsilon}$.

It follows from \eqref{eq:qepmatright} that
\begin{equation}\label{eq:qepmatleft}
T^{-2\star} X^{\star} A_1^{\star} +T^{-\star} X^{\star} A_0 +X^{\star} \epsilon A_1 =0,
\end{equation}
which is equivalent to
\begin{align}\label{eq:vmjvl}
 W^{\star} M=T^{-\star} W^{\star} L,
\end{align}
where $M=\begin{bmatrix}
\epsilon A_1 & 0\\ -A_0 & -I
\end{bmatrix}$, $W$ and $L$ are defined in \eqref{WLJ}. Direct calculations show that these $M,L$ and $J_{\epsilon}$ satisfy
\begin{align}\label{mjm}
MJ_{\epsilon}M^{\star}
=\begin{bmatrix}0 & -\epsilon A_1 \\ A_1^{\star} & 0\end{bmatrix}
=LJ_{\epsilon}L^{\star}.
\end{align}
Using \eqref{s}, \eqref{mjm} and \eqref{eq:vmjvl}, we have
\begin{align*}
S^{-1}=W^{\star} LJ_{\epsilon}L^{\star} W
=W^{\star} MJ_{\epsilon}M^{\star} W
=T^{-\star}W^{\star} L J_{\epsilon} L^{\star} W T^{-1}
=T^{-\star} S^{-1} T^{-1},
\end{align*}
from which we can see that $S=T S T^{\star}$. Hence we have $S\in\mathbb{S}_T$.

From \eqref{s} and the definition of $W$ in \eqref{WLJ},  by calculation we have
\begin{align*}
\begin{bmatrix}
XSX^{\star} & -XST^{-\star}X^{\star}\\
-XT^{-1}SX^{\star} & XT^{-1}ST^{-\star}X^{\star}
\end{bmatrix}=
WSW^{\star}=(LJ_{\epsilon}L^{\star})^{-1}
=\begin{bmatrix}0 &  A_1^{-\star}  \\ -\epsilon A_1^{-1} & 0 \end{bmatrix}.
\end{align*}
The above equation gives $XSX^{\star}=0$, which implies $S\in\mathbb{S}_{(X,T)}$, 
and the formula for $A_1$ as in \eqref{a1a0}. The formula for $A_0$ in \eqref{a1a0} then follows readily by postmultiplying $T^{-2}SX^{\star}$ on \eqref{eq:qepmatright}.
\end{proof}

Theorem~\ref{thm:sp} is motivated by Chu and Xu \cite{chu2009spectral} for symmetric systems, which can also be obtained via the GLR theory \cite{gohberg2009matrix}. It also follows from cases (4) and (5) with $m=2$ of Theorem~3.5 in \cite{Al-Ammari2012}: the parameter matrix $S$ in Theorem~\ref{thm:sp} is just $-\mathcal{S}v_{\mathcal{S}}(\mathcal{T})$ and the equations $S^{\star} = -\epsilon S, S=T S T^{\star}$ in the definition of $\mathbb{S}_T$ follows from the equation $\mathcal{T}\mathcal{S}^{\star}=-\alpha\epsilon \mathcal{S}$ in Theorem 3.5 of \cite{Al-Ammari2012}.

\subsection{The structure of $S\in\mathbb{S}_T$}\label{ss}
We now take a deep look at the structure of the parameter matrix $S\in\mathbb{S}_T$. Recall that the eigenvalues of  QEP \eqref{eq:qep} occur in pairs $(\lambda,\frac{1}{\lambda^{\star}})$,
with the same algebraic multiplicities and same partial multiplicities.
Let the distinct eigenvalues of $Q(\lambda)$ be
\begin{align}
\lambda_1,\lambda_2,\dots,\lambda_{2\ell-1},\lambda_{2\ell}, \lambda_{2\ell+1},\dots,\lambda_{t},
\end{align}
where for $i=1,\dots,\ell$,$\lambda_{2i}=\frac{1}{\lambda_{2i-1}^{\star}}$, $|\lambda_{2i-1}| \le 1$,
and for $i=2\ell+1,\dots, t$, $\lambda_i=\pm 1$  for $\top$-(anti)-palindromic system 
or $|\lambda_i|=1$ for $*$-(anti)-palindromic system.
For $i=1,\dots, t$,
assume that $\lambda_i$ has algebraic multiplicity  $n_i$,  geometry multiplicity $m_i$, 
and partial multiplicities $n_{i1}\ge n_{i2}\ge \dots \ge n_{im_i}$. 
Then we can denote the Jordan block associated with $\lambda_i$ by 
\begin{equation}
J_i=\lambda_i I_{n_i}+N_i, \quad N_i=\diag(N_{i1},\dots,N_{im_i}),
\end{equation}
where $N_{ij}\in\mathbb{R}^{n_{ij}\times n_{ij}}$ is a nilpotent matrix with $n_{ij}-1$ ones along its superdiagonal for each $j=1,\dots, m_i$.
Therefore, $\sum_{i=1}^{t}n_i=\sum_{i=1}^t\sum_{j=1}^{m_i}n_{ij}=2n$, 
and for $i=1,\dots, \ell$,  $j=1,\dots, m_i$,  $n_{2i-1}=n_{2i}$, $n_{2i-1,j}=n_{2i,j}$.
Let 
\begin{align}
J=\diag(J_1, J_2,\dots, J_t).
\end{align}
Hereafter $J$ of the above form will be referred to as the {\em palindromic Jordan canonical form} (PJCF).

Let $(X, J)$ be a stand pair of QEP \eqref{eq:qep} with $J$ of the PJCF.
Direct calculations show that $S\in\mathbb{S}_J$ if and only if $S$ is of the form
\begin{equation}\label{eq:s}
S=\diag\left(\begin{bmatrix} 0 & S_1\\ -\epsilon S_1^{\star} & 0\end{bmatrix},\dots, \begin{bmatrix} 0 & S_{\ell}\\ -\epsilon S_{\ell}^{\star} & 0\end{bmatrix}, S_{2\ell+1},\dots, S_t\right),
\end{equation}
where for $i=1,\dots, t$, $S_i$ satisfies 
\[
(\lambda_i I+N_i)S_i(\frac{1}{\lambda_i^{\star}}I+N_i)^{\star}=S_i,
\]
and the matrices $S_i$ display the sign characteristic of $Q(\lambda)$ \cite{Al-Ammari2012}.
Partition $S_i$ as $S_i=[S^{(i)}_{jk}]$, where $S^{(i)}_{jk}\in\mathbb{C}^{n_{ij}\times n_{ik}}$. It then holds that
\begin{equation}\label{eq:jsj}
(\lambda_i I+N_{ij} )S^{(i)}_{jk} (\frac{1}{\lambda_i^{\star}}I+N_{ik})^{\star}=S^{(i)}_{jk}.
\end{equation}
Notice that $(\frac{1}{\lambda_i^{\star}}I+N_{ik})^{-\star}$ is similar to $\lambda_i I+N_{ik}^{\top} $, i.e., 
there exists a nonsingular matrix $P_{ik}$ such that $(\frac{1}{\lambda_i^{\star}}I+N_{ik})^{-\star}=P_{ik}^{-1}(\lambda_i I+N_{ik}^{\top})P_{ik}$.
To be specific, $P_{ik}$ can be given by 
\begin{equation}\label{p}
P_{ik}=\diag(\lambda_i^{n_{ik}-1},\lambda_i^{n_{ik}-2},\dots,1)L_{ik}
\diag(1,-1/\lambda_i,\dots, 1/(-\lambda_i)^{n_{ik}-1}),
\end{equation}
where $L_{ik}$ is a lower triangular  {\em Pascal-like matrix} of order $n_{ik}$.
Here a lower triangular  Pascal-like matrix $L$ of order $m$ is defined as $L=[l_{ij}]$,
where $l_{ij}=0$ for $i<j$, and $l_{ij}=C({m-j}, {m-i})$ for $i\ge j$. The symbol $C(\cdot,\cdot)$ stands for the binomial coefficient. Now that \eqref{eq:jsj} becomes 
\[
N_{ij} S^{(i)}_{jk} P_{ik}^{-1}=  S^{(i)}_{jk} P_{ik}^{-1} N_{ik}^{\top}.
\]
Using the results in \cite{chu2009spectral}, we know that each $S^{(i)}_{jk} P_{ik}^{-1}$ is an upper triangular Hankel matrix. 
Then it follows that $S^{(i)}_{jk}=P_{ik}H_{ik}$, where $P_{ik}$ is a  lower triangular scaled  rotated {Pascal matrix}, and $H_{ik}$ is an upper triangular Hankel matrix. 
In another word, $S^{(i)}_{jk}$ is the product of a scaled rotated Pascal matrix and a Hankel matrix.
Hereafter, we will refer matrices with such structure as {\em Pascal-Hankel} matrices.
 
So, overall speaking, the parameter matrix $S$ is in a block diagonal form \eqref{eq:s},
and each $S_i$ is a block Pascal-Hankel matrix.
It is worth mentioning here that, in $S$, the Hankel matrices $H_{ik}$ are free parameters,
while the matrices $P_{ik}$ are all constant.

When all eigenvalues are semi-simple, then $S$ is still in the form \eqref{eq:s}, 
and each $S_i$ has only one block, which is a Pascal-Hankel matrix.
Furthermore, when all eigenvalues of $Q(\lambda)$ are simple, let
\begin{equation}\label{simplel}
\Lambda=\diag(\lambda_1,\lambda_2,\dots,\lambda_{2\ell},\lambda_{2\ell+1},\dots,\lambda_{2n}),
\end{equation}
where  for $i=1,2,\dots,\ell$, $\lambda_{2i}=\frac{1}{\lambda_{2i-1}^{\star}}$,
and for $i=2\ell+1,\dots, 2n$, $\lambda_i=\pm 1$ for $\top$-(anti)-palindromic system or $|\lambda_i|=1$ for $*$-(anti)-palindromic system,
then $S\in\mathbb{S}_{\Lambda}$ if and only if
\begin{equation}\label{simples}
S=\diag\left(\begin{bmatrix}0 & s_1\\ -\epsilon s_1^{\star} & 0\end{bmatrix},\dots,
\begin{bmatrix}0 & s_{\ell}\\ -\epsilon s_{\ell}^{\star} & 0\end{bmatrix},
s_{2\ell+1},\dots, s_{2n}\right),
\end{equation}
where for $i=1,2,\dots,\ell$, $s_i\in\mathbb{C}$,
and for $i=2\ell+1,\dots, 2n$, $s_i^{\star}=-\epsilon s_i$.

\begin{Remark}{\rm
For  $*$-palindromic and $*$-anti-palindromic systems, $s_i^{\star}=-\epsilon s_i$ implies that
$s_i$ is pure imaginary and real, respectively.
For $\top$-anti-palindromic system, $s_i^{\star}=-\epsilon s_i$ holds for any $s_i\in\mathbb{C}$.
However, for $\top$-palindromic system, $s_i^{\star}=-\epsilon s_i$ implies $s_i=0$,
which seems to contradict with the fact that the parameter matrix $S$ should be nonsingular for a regular $\top$-palindromic system.
But, in fact, $1(-1)$ can not be a simple eigenvalue of a regular $\top$-palindromic system.
This is because when $1(-1)$ is an simple eigenvalue, then $-1(1)$ is also an eigenvalue with an odd algebraic multiplicity.
Therefore, the product of all eigenvalues $\Pi_{i=1}^{2n}\lambda_i$ equals to $-1$, which contradicts with 
$\Pi_{i=1}^{2n}\lambda_i=\det(A_1)/\det(A_1^{\top})=1$.
}\end{Remark}

\subsection{A special standard pair}
Let $(X,T)$ be a standard pair of $Q(\lambda)$, where $T$ is not necessarily of the PJCF, and $S$ be the corresponding parameter matrix. Direct calculation shows that $(XY,Y^{-1}TY)$ is also a standard pair of $Q(\lambda)$ for any nonsingular matrix $Y$, and the corresponding parameter matrix becomes 
$Y^{-1}SY^{-\star}$. 
We wish to obtain a special standard pair by introducing appropriate nonsingular matrix $Y$, such that the corresponding parameter matrix is as simple as possible.

We will need the following results. For any matrix $B\in\mathbb{C}^{n\times n}$ satisfying $B^{\star}=-\epsilon B$ and $\rank(B)=t$,
it can be factorized as
\begin{align}\label{bzgz}
B=Z\Gamma Z^{\star},
\end{align}
where $Z$ is unitary,  and
\begin{align}\label{Gamma}
\Gamma=
\begin{cases}
\diag(\gamma_1,\gamma_2,\dots,\gamma_t,0,\dots, 0),\quad  0\ne \imath\gamma_i\in\mathbb{R}, \quad &\mbox{if } \star=*, \epsilon=1; 
\\
\diag(\gamma_1,\gamma_2,\dots,\gamma_t,0,\dots, 0),\quad 0\ne \gamma_i\in\mathbb{R}, \quad &\mbox{if } \star=*, \epsilon=-1;\\
\diag(\bsmat 0 & \gamma_1\\ -\gamma_1 & 0\esmat, \dots, \bsmat 0& \gamma_{t/2}\\ -\gamma_{t/2} & 0\esmat,0,\dots, 0),\quad \gamma_i>0, \quad &\mbox{if } \star=\top, \epsilon=1,
\\
\diag(\gamma_1,\gamma_2,\dots,\gamma_t,0,\dots, 0),\quad  \gamma_i>0, \quad &\mbox{if } \star=\top, \epsilon=-1.
\end{cases}
\end{align}
The first two cases ($\star=*$, $\epsilon=\pm1$) are actually the Schur decompositions of $B$;
the last two cases ($\star=\top$, $\epsilon=\pm1$) are  Hua's decompositions \cite[Theorem 7.5]{hua1944basis}. 
Using permutation and diagonal scaling, we can further factorize $\Gamma$ as 
\begin{equation}\label{Gammafac}
\Gamma= PD \Delta DP^{\top},
\end{equation}
where $P$ is a permutation matrix, $D>0$ is diagonal, and
\begin{equation}\label{Delta}
\Delta=
\left\{\begin{array}{lll}
\Delta_{p,q},   & p+q=t, &\mbox{if } \star=* ;\\
\Delta_t, & & \mbox{if } \star=\top.
\end{array}\right.
\end{equation}
Here
\begin{subequations}\label{Delta2}
\begin{align}
\Delta_{p,q}&=
\left\{\begin{array}{ll}
\diag(\imath I_q,-\imath I_p,0),  &\mbox{if }  \epsilon=1,
\\
\diag(I_p,-I_q ,0), &\mbox{if }  \epsilon=-1,
\end{array}\right.\label{Deltapq}
\\
\Delta_t&=
\left\{\begin{array}{ll}
\diag(\bsmat 0 & I_{t/2}\\ -I_{t/2} & 0\esmat, 0), & \mbox{if } \epsilon=1,
\\
\diag(I_t, 0), &  \mbox{if }  \epsilon=-1,
\end{array}\right.\label{Deltat}
\end{align}
\end{subequations}
where $p$, $q$ are the positive and negative inertia indices of $\sqrt{-\epsilon}B$, respectively.
Then let $Y=ZPD$, \eqref{bzgz} can be rewritten as 
\begin{align}\label{ydy}
B=Y\Delta Y^{\star}.
\end{align}
Hereafter, we call the factorization $B=Y\Delta Y^{\star}$ 
as the {\em $\star$-factorization} of $B$, 
the matrices $Y$ and $\Delta$ as the $Y$ and $\Delta$ {\em factors} of $B$, respectively. 

Now let $(X,T)$ be a standard pair of $Q(\lambda)$, and $S$ be the corresponding parameter matrix.
Then $S$ is nonsingular and satisfies $S^{\star}=-\epsilon S$.
Therefore, the $\star$-factorization of $S$ is of the form
\begin{equation}\label{facS}
S=\widehat{Y}\widehat{\Delta} \widehat{Y}^{\star},
\end{equation}
where $\widehat{Y}$ is nonsingular, and
\begin{equation}\label{Delta2n}
\widehat{\Delta}=
\left\{\begin{array}{ll}
\Delta_{n,n},   &\mbox{if } \star=* ;\\
\Delta_{2n}, & \mbox{if } \star=\top.
\end{array}\right.
\end{equation}
Here $\Delta_{n,n}$ and $\Delta_{2n}$ are defined in \eqref{Deltapq} and \eqref{Deltat}, respectively.
And using \eqref{s} we can show $p=q=n$ in $\Delta_{n,n}$.

Let $(\widehat{X}, \widehat{T})=(X \widehat{Y}, \widehat{Y}^{-1}T \widehat{Y})$, 
then $(\widehat{X}, \widehat{T})$ is also a standard pair of $Q(\lambda)$,
and the corresponding parameter matrix is $\widehat{\Delta}$.
Using $\widehat{\Delta}\in\mathbb{S}_{\widehat{T}}$, we have
$\widehat{\Delta}=\widehat{T}\widehat{\Delta}\widehat{T}^{\star}$.
Then $\widehat{T}$ is $U$-symplectic for $\star=*$, 
that is,  $U^* \widehat{T}U$ is conjugate symplectic, where $U_{2n}=\frac{1}{\sqrt{2}}\bsmat I_n & I_n \\ \imath I_n & -\imath I_n\esmat$;
symplectic for $\star=\top$, $\epsilon=1$;
complex orthogonal for $\star=\top$, $\epsilon =-1$.
Notice here that the eigenvalues of $U$-symplectic matrix and symplectic/complex orthogonal matrix
 appear in pairs $(\mu,\frac{1}{\bar{\mu}})$ and $(\mu,\frac{1}{\mu})$, respectively.

\section{Solving IEP-QP($k$)}\label{app}
In this section, we consider the inverse eigenvalue problem of quadratic $\star$-(anti)-palindromic systems with $k$ prescribed eigenpairs (IEP-QP($k$)), which can be stated as follows.

{\em \textbf{IEP-QP$(k)$}: Given $(X_1,T_1)\in\mathbb{C}^{n\times k}\times \mathbb{C}^{k\times k}$ with $\bsmat X_1\\-X_1T_1^{-1}\esmat$ of full column rank ($1\le k \le 2n$), construct a  $\star$-(anti)-palindromic system $Q(\lambda)$ such that 
\begin{equation}\label{eq:iep}
A_1^{\star}X_1T_1^2+A_0 X_1T_1 +\epsilon A_1 X_1=0.
\end{equation}
}

The solution $Q(\lambda)$ is referred to as a regular solution if the leading coefficient matrix is nonsingular.
Note the IEP-QP($k$) considered here is slightly different from the problem considered in \cite{Al-Ammari2011}, where only eigenvalues are given while eigenvectors are not prescribed.
%
%

\subsection{Case $k=2n$} 
We first consider the special case when $k=2n$, that is, all eigenpairs are given. The following theorem presents  a necessary and sufficient condition on the existence of a regular solution to the IEP-QP$(2n)$.

\begin{Theorem}\label{thm:iep2n}
Let $(X,T)\in\mathbb{C}^{n\times 2n}\times \mathbb{C}^{2n\times 2n}$ and 
$W=\bsmat X\\ -XT^{-1} \esmat$ be nonsingular.
Then there exists a regular solution $Q(\lambda)$ to the IEP-QP$(2n)$
if and only if there exists a nonsingular $S\in\mathbb{S}_{(X,T)}$.
In such case, the coefficient matrices $A_1$, $A_0$ of $Q(\lambda)$ are given by \eqref{a1a0}.
\end{Theorem}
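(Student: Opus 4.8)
The plan is to prove both directions by leaning on Theorem~\ref{thm:sp} and on the explicit formulas \eqref{a1a0}. First I would dispose of the ``only if'' direction: suppose $Q(\lambda)$ is a regular solution to the IEP-QP$(2n)$, so $A_1$ is nonsingular, $A_0^{\star}=\epsilon A_0$, and \eqref{eq:iep} holds with $(X_1,T_1)=(X,T)$. Since $W$ is nonsingular by hypothesis, $(X,T)$ is then a standard pair of $Q(\lambda)$ in the sense defined before \eqref{eq:qepmatright}. Now I simply invoke Theorem~\ref{thm:sp}: the matrix $S=(W^{\star}LJ_{\epsilon}L^{\star}W)^{-1}$ is well defined (both $W$ and $L$ are nonsingular because $A_1$ is), lies in $\mathbb{S}_{(X,T)}$, and is visibly nonsingular. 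This produces the required nonsingular $S\in\mathbb{S}_{(X,T)}$, and moreover shows that the coefficient matrices are given by the formulas \eqref{a1a0}, which handles the last sentence of the theorem at the same time.

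For the ``if'' direction, suppose there is a nonsingular $S\in\mathbb{S}_{(X,T)}$, i.e.\ $S^{\star}=-\epsilon S$, $S=TST^{\star}$, and $XSX^{\star}=0$. I would define $A_1$ and $A_0$ by the formulas \eqref{a1a0} and then verify that the resulting $Q(\lambda)=\lambda^2A_1^{\star}+\lambda A_0+\epsilon A_1$ is a regular $\star$-(anti)-palindromic system having $(X,T)$ as a standard pair. There are four things to check: (i) $A_1$ is well defined and nonsingular --- this needs $XT^{-1}SX^{\star}$ to be invertible, which I would extract from the block identity $WSW^{\star}=(LJ_{\epsilon}L^{\star})^{-1}$ in reverse, i.e.\ from nonsingularity of $W$ and $S$ together with the structural constraint $XSX^{\star}=0$, the $(2,1)$ block of $WSW^{\star}$ being exactly $-XT^{-1}SX^{\star}$; (ii) the palindromic symmetry $A_0^{\star}=\epsilon A_0$ --- a direct computation using $A_0=-A_1XT^{-2}SX^{\star}A_1$, $S^{\star}=-\epsilon S$, $A_1^{\star}$ versus $A_1$, and the relation $S=TST^{\star}$ to move $T^{-2}$ past $S$; (iii) the standard-pair equation \eqref{eq:qepmatright}, $A_1^{\star}XT^2+A_0XT+\epsilon A_1X=0$, which should collapse to an identity once $A_1,A_0$ are substituted and $XSX^{\star}=0$, $S=TST^{\star}$ are used to simplify; and (iv) $W$ nonsingular, which is already assumed. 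Then \eqref{eq:iep} is just \eqref{eq:qepmatright} with $(X_1,T_1)=(X,T)$, so $Q(\lambda)$ solves the IEP-QP$(2n)$.

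The cleanest route, and the one I would actually take to avoid grinding through (ii) and (iii) by brute force, is to reverse-engineer the proof of Theorem~\ref{thm:sp}. Given the nonsingular $S\in\mathbb{S}_{(X,T)}$, set $B:=WSW^{\star}$; the constraint $XSX^{\star}=0$ forces $B$ to have zero $(1,1)$ block, and $S^{\star}=-\epsilon S$ forces $B^{\star}=-\epsilon B$, so $B$ has the form $\bsmat 0 & C^{\star} \\ -\epsilon C & 0\esmat$ for some nonsingular $C$; one then defines $A_1:=\epsilon C^{-\star}$ so that $B=(LJ_{\epsilon}L^{\star})^{-1}$ with $L$ as in \eqref{WLJ}. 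From $S=TST^{\star}$ one recovers $W^{\star}LJ_{\epsilon}L^{\star}W=T^{-\star}W^{\star}LJ_{\epsilon}L^{\star}WT^{-1}$, which is \eqref{eq:vmjvl} after setting $M:=LJ_{\epsilon}L^{\star}W T^{-1}(W^{\star}L)^{-\star}$ --- or more directly one checks that the $M$ built from these $A_1$ and the corresponding $A_0$ satisfies both \eqref{mjm} and \eqref{eq:vmjvl}, which together are equivalent to \eqref{eq:qepmatleft}, hence to \eqref{eq:qepmatright}; along the way the block structure of $M$ delivers $A_0^{\star}=\epsilon A_0$. I expect the main obstacle to be exactly this bookkeeping: showing that the $(2,1)$ and $(2,2)$ blocks of $WSW^{\star}$ are consistent with a single choice of $A_1$ (equivalently, that $XT^{-1}ST^{-\star}X^{\star}=0$ as well, which follows from $XSX^{\star}=0$ together with $S=TST^{\star}$), and verifying the off-diagonal block of \eqref{eq:vmjvl} gives precisely the stated formula for $A_0$. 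None of this is deep, but it is the one place where the constraints defining $\mathbb{S}_{(X,T)}$ must all be used simultaneously rather than one at a time.
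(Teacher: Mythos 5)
Your proposal is correct and follows essentially the same route as the paper: necessity (together with the formula claim) by invoking Theorem~\ref{thm:sp}, and sufficiency by defining $A_1,A_0$ through \eqref{a1a0} and verifying the standard-pair relation, which the paper does in one stroke by multiplying $A_1^{\star}XT^2+A_0XT+\epsilon A_1X$ on the right by the nonsingular matrix $T^{-2}SW^{\star}$ and using $XSX^{\star}=0$, $S=TST^{\star}$, $S^{\star}=-\epsilon S$. Your checks (i) and (ii) are points the paper leaves implicit and they do go through; the only small slip is the normalization in your alternative sketch, where the zero diagonal blocks of $WSW^{\star}$ force $A_1=C^{-1}$ (equivalently $A_1^{-\star}$ equals the $(1,2)$ block), not $A_1=\epsilon C^{-\star}$.
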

\begin{proof}
The necessity can be directly followed from Theorem~\ref{thm:sp}. 
Next, we only show the sufficiency.
Let $S$ be a nonsingular matrix and $S\in\mathbb{S}_{(X,T)}$.
Define $A_1$, $A_0$ as in \eqref{a1a0}, 
 direct calculation leads to
\begin{align*}
&(A_1^{\star}XT^2+A_0XT+\epsilon A_1X)T^{-2}SW^{\star}\\
=&(A_1^{\star}X+A_0XT^{-1}+\epsilon A_1XT^{-2})S[X^{\star} \; |\; T^{-\star}X^{\star} ]\\
=&[0+A_0(\epsilon A_1^{-1})+\epsilon A_1(- A_1^{-1}A_0A_1^{-1}) \; | \; 
A_1^{\star}(- \epsilon^2 A_1^{-\star})+0+\epsilon A_1(\epsilon A_1^{-1})]=0.
\end{align*}
Hence $A_1^{\star}XT^2+A_0XT+\epsilon A_1X=0$, which completes the proof of sufficiency.
\end{proof}

For each nonsingular $S\in\mathbb{S}_{(X,T)}$, if it exists, \eqref{a1a0} gives a regular solution $Q(\lambda)$ to the IEP-QP$(2n)$. If such $S$ is unique up to scalars, these solutions $Q(\lambda)$ to the IEP-QP$(2n)$ are also unique up to scalars. However, if such $S$ is not unique up to scalars, we will show that under some mild conditions, the coefficient matrices of $Q(\lambda)$ can be jointly block diagonalized. We will need the following two lemmas.


\begin{Lemma}\label{lem:jbd1}
Given $(X,T)\in\mathbb{C}^{n\times 2n}\times \mathbb{C}^{2n\times 2n}$ with $\bsmat X\\ -XT^{-1}\esmat$ nonsingular. 
Suppose that there exist two nonsingular matrices $S, \widetilde{S}\in\mathbb{S}_{(X,T)}$.
Then the eigenvalues of $\widetilde{S}S^{-1}$ have the pairing $(\mu,\mu^{\star})$.
If $\mu\ne \mu^{\star}$, they have the same algebraic multiplicity;
otherwise, the multiplicity of $\mu$ must be even.
\end{Lemma}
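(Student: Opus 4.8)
The plan is to exploit the two defining properties of any $S\in\mathbb{S}_{(X,T)}$ — namely $S^{\star}=-\epsilon S$ and $S=TST^{\star}$ — to show that $\widetilde{S}S^{-1}$ is similar (in fact $\star$-congruent up to the obvious twist) to something whose spectrum is symmetric under $\mu\mapsto\mu^{\star}$. First I would set $N=\widetilde{S}S^{-1}$ and observe that, since both $S$ and $\widetilde S$ satisfy $S^{\star}=-\epsilon S$, taking $\star$ of $N$ gives $N^{\star}=S^{-\star}\widetilde S^{\star}=(-\epsilon S^{-1})(-\epsilon\widetilde S)=S^{-1}\widetilde S$. Thus $N^{\star}=S^{-1}\widetilde S=S^{-1}NS$, i.e. $N^{\star}$ and $N$ are similar via $S$. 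For $\star=\top$ this says $N^{\top}\sim N$, which is automatic and gives no spectral information, so the relation $S=TST^{\star}$ must be brought in; for $\star=*$ it already says $N^{*}\sim N$, giving the pairing $(\mu,\bar\mu)$ with equal multiplicities — but we still want the uniform statement $(\mu,\mu^{\star})$, so I will treat both cases through the symplectic structure.

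The cleaner route uses the special standard pair from Section~2.3. By that construction there is a nonsingular $\widehat Y$ with $\widehat S:=\widehat Y^{-1}S\widehat Y^{-\star}=\widehat\Delta$ the canonical involution-type matrix, and correspondingly $\widehat T=\widehat Y^{-1}T\widehat Y$ is $U$-symplectic (for $\star=*$) or symplectic / complex orthogonal (for $\star=\top$). Under this change of pair, $\widetilde S$ becomes $\check S:=\widehat Y^{-1}\widetilde S\widehat Y^{-\star}\in\mathbb{S}_{(\widehat X,\widehat T)}$, and $\widetilde S S^{-1}$ is similar to $\check S\widehat\Delta^{-1}$. Since $\widehat\Delta^{-1}=\pm\widehat\Delta^{\star}$ is (up to the sign $-\epsilon$) a $J$-type matrix, the product $\check S\widehat\Delta^{-1}$ is, up to sign, $\check S J$ where $\check S^{\star}=-\epsilon\check S$; such "$J$-skew-Hermitian-matrix times $J$" products are precisely the Lie-algebra-to-group-type objects whose eigenvalues are symmetric under $\mu\mapsto\mu^{\star}$, with matching Jordan structure off the fixed set. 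Concretely I would verify directly that $(\check S\widehat\Delta^{-1})^{\star}=\widehat\Delta^{-\star}\check S^{\star}=(-\epsilon\widehat\Delta^{-1})^{\star}\cdot\text{something}$ rearranges to $\widehat\Delta(\check S\widehat\Delta^{-1})^{-1}\widehat\Delta^{-1}$ or similar, exhibiting $N^{\star}\sim N^{-1}$... no: I want $N^{\star}\sim N$, and I already have that from the first paragraph. So the remaining content is purely: $N^{\star}\sim N$ via a matrix ($S$) which additionally intertwines with $T$ through $S=TST^{\star}$.

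So the honest key step is the following. From $N^{\star}=S^{-1}NS$ we get that $\bar N$ (entrywise conjugate, when $\star=*$) or $N$ itself (when $\star=\top$) has the same spectrum as $N$; combined with taking transposes this yields spectrum of $N$ invariant under $\mu\mapsto\mu^{\star}$ and, because similarity preserves Jordan structure, equal algebraic (indeed partial) multiplicities for $\mu$ and $\mu^{\star}$ when $\mu\ne\mu^{\star}$. For the self-paired case $\mu=\mu^{\star}$ I would argue that evenness of the multiplicity forces a further structural constraint: in the special-pair coordinates $N$ is conjugate to $\check S\widehat\Delta^{-1}$ with $\check S^{\star}=-\epsilon\check S$ nonsingular, and a nonsingular $\star$-skew matrix in dimension $2n$ over $\mathbb{C}$ carries a nondegenerate $\epsilon$-skew form; the eigenspace (generalized) of a self-paired eigenvalue $\mu$ is then isotropic-complementary-paired with itself, forcing even dimension — this is exactly the classical fact that a symplectic (resp. $U$-symplectic, complex-orthogonal) matrix has even multiplicity at its "self-conjugate" eigenvalues $\pm1$ (resp. on the relevant circle/line). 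I expect this self-paired case to be the main obstacle: the equal-multiplicity pairing for $\mu\ne\mu^{\star}$ is a two-line similarity argument, but pinning down evenness at $\mu=\mu^{\star}$ requires invoking the nondegenerate bilinear/sesquilinear form carried by $S$ and a short isotropy-dimension count, case-split over the four $(\star,\epsilon)$ combinations using the canonical $\widehat\Delta$ from \eqref{Delta2n}.
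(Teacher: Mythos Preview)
Your argument has a genuine gap: you never use the condition $XSX^{\star}=0$ (nor, in any essential way, $S=TST^{\star}$), and without it the conclusion is simply false. Concretely, take $\star=\top$, $\epsilon=-1$, $n=1$, $T=\diag(1,-1)$, $X=[1,\,1]$. Then $\mathbb{S}_T$ consists of all diagonal $2\times 2$ matrices $\diag(s_1,s_2)$, and for $S=I$, $\widetilde S=\diag(1,2)$ one gets $\widetilde S S^{-1}=\diag(1,2)$ with two simple (odd-multiplicity) eigenvalues. The constraint $XSX^{\top}=0$ forces $s_2=-s_1$, collapsing $\mathbb{S}_{(X,T)}$ to scalar multiples of $\diag(1,-1)$, and only then does $\widetilde S S^{-1}$ become a scalar matrix with even multiplicity. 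Your relation $N^{\star}=S^{-1}NS$ says exactly that $N$ is self-adjoint with respect to the nondegenerate bilinear/sesquilinear form given by $S^{-1}$; when $\star=\top$, $\epsilon=-1$ that form is \emph{symmetric}, and when $\star=*$ it is \emph{Hermitian} (up to a factor of $i$), and in neither case does self-adjointness force even multiplicity at the self-paired eigenvalues. Your ``isotropy-dimension count'' only goes through in the symplectic case $(\star,\epsilon)=(\top,1)$, where $N$ is genuinely skew-Hamiltonian.

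The paper's proof is structurally different and uses all three defining properties of $\mathbb{S}_{(X,T)}$ in an essential way: from $S$ and $\widetilde S$ it constructs, via Theorem~\ref{thm:iep2n}, two leading coefficients $A_1,\widetilde A_1\in\mathbb{C}^{n\times n}$, and then shows directly that
\[
\widetilde S S^{-1}\;=\;W^{-1}\,\diag\bigl(\widetilde A_1^{-\star}A_1^{\star},\ \widetilde A_1^{-1}A_1\bigr)\,W,
\qquad W=\begin{bmatrix} X\\ -XT^{-1}\end{bmatrix}.
\]
Since $\bigl(\widetilde A_1^{-1}A_1\bigr)^{\star}$ is similar to $\widetilde A_1^{-\star}A_1^{\star}$, the two $n\times n$ blocks have spectra related by $\mu\mapsto\mu^{\star}$, and the $(\mu,\mu^{\star})$ pairing together with evenness at $\mu=\mu^{\star}$ drop out immediately from the block-diagonal form. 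The construction of $A_1$ (hence the whole block-diagonalization) hinges on $XSX^{\star}=0$, which is precisely the ingredient missing from your approach.
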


\begin{proof}
Define $Q(\lambda)$ as in \eqref{pal}, where $A_1$ and $A_0$ are given by \eqref{a1a0} in terms of $X$, $T$ and $S$. Likewise, define $\widetilde{Q}(\lambda)$ with the coefficient matrices given by \eqref{a1a0} in terms of $X$, $T$ and $\widetilde{S}$.
By Theorem~\ref{thm:iep2n}, we know that $(X,T)$ is a stand pair of $Q(\lambda)$ and also $\widetilde{Q}(\lambda)$.
Now let $W=\bsmat X \\ -XT^{-1}\esmat$, we have 
\begin{align*}
S^{-1}=W^{\star}\bsmat 0 & -\epsilon A_1 \\ A_1^{\star} & 0\esmat W,\quad
\widetilde{S}^{-1}=W^{\star}\bsmat 0 & -\epsilon \widetilde{A}_1 \\ \widetilde{A}_1^{\star} & 0\esmat W.
\end{align*}
Then it follows that
\begin{align}\label{sswaw}
\widetilde{S}S^{-1}=W^{-1} \diag(\widetilde{A}_1^{-\star}A_1^{\star}, \widetilde{A}_1^{-1}A_1) W.
\end{align}
Noticing that 
\begin{align*}
\widetilde{A}_1^{-1} A_1 x=\mu x 
\Longleftrightarrow (x^{\star}A_1^{\star}) (\widetilde{A}_1^{-\star} A_1^{\star})= \mu^{\star} (x^{\star}A_1^{\star}),
\end{align*}
the conclusion follows immediately.
\qquad \end{proof}

\begin{Lemma}\label{lem:jbd2}
Let $(X, J)$ be a stand pair of $Q(\lambda)$ with $J$ of the PJCF.
Assume the geometry multiplicity of each distinct eigenvalue is one.
Then for any nonsingular $S,\widetilde{S}\in\mathbb{S}_{J}$ of the form
\begin{align*}
S&=\diag\left(\bsmat 0 & S_1\\ -\epsilon S_1^{\star} & 0\esmat,\dots, \bsmat 0 & S_{\ell}\\ -\epsilon S_{\ell}^{\star} & 0\esmat, S_{2\ell+1},\dots, S_t\right),\\
\widetilde{S}&=\diag\left(\bsmat 0 & \widetilde{S}_1\\ -\epsilon \widetilde{S}_1^{\star} & 0\esmat,\dots, 
\bsmat 0 & \widetilde{S}_{\ell}\\ -\epsilon \widetilde{S}_{\ell}^{\star} & 0\esmat, \widetilde{S}_{2\ell+1},\dots, \widetilde{S}_t\right),
\end{align*}
$\widetilde{S}S^{-1}$ is of the form
\begin{align}\label{ssform}
\widetilde{S}S^{-1}=\diag(\widetilde{S}_1^{-\star}S_1^{\star},\widetilde{S}_1^{-1}S_1,
\dots,\widetilde{S}_{\ell}^{-\star}S_{\ell}^{\star},\widetilde{S}_{\ell}^{-1}S_{\ell},
\widetilde{S}_{\ell+1}^{-1}S_{\ell+1},\dots, \widetilde{S}_{t}^{-1}S_{t}),
\end{align}
and for each $i=1,2,\dots,t$, $\widetilde{S}_i^{-1}S_i$ is a lower triangular Toeplitz matrix.
\end{Lemma}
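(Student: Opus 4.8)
\noindent\emph{Proof proposal.}\quad
I would establish the two assertions separately. The block-diagonal form of $\widetilde S S^{-1}$ is pure block arithmetic: $S$ and $\widetilde S$ carry the same partition, into the $\ell$ paired blocks $\bsmat 0 & S_i\\ -\epsilon S_i^{\star} & 0\esmat$ and the unpaired blocks $S_{2\ell+1},\dots,S_t$, so the product decouples. Inserting
\[
\bsmat 0 & S_i\\ -\epsilon S_i^{\star} & 0\esmat^{-1}=\bsmat 0 & -\epsilon S_i^{-\star}\\ S_i^{-1} & 0\esmat
\]
(here $\epsilon^{-1}=\epsilon$) and premultiplying by $\bsmat 0 & \widetilde S_i\\ -\epsilon \widetilde S_i^{\star} & 0\esmat$, the off-diagonal parts cancel and the diagonal blocks $\widetilde S_iS_i^{-1}$ and $\widetilde S_i^{\star}S_i^{-\star}$ survive; each unpaired block contributes $\widetilde S_iS_i^{-1}$ directly. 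This is the shape recorded in \eqref{ssform}, each block being rewritten, as convenient, through its inverse or $\star$.

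For the Toeplitz property of the diagonal blocks I would use the commutant of a single Jordan block rather than the explicit Pascal--Hankel parametrisation. Fix a block and let $N$ be the nilpotent part of the Jordan block(s) it involves; by the hypothesis that every eigenvalue has geometric multiplicity one, $N$ is a \emph{single} nilpotent Jordan block. Restricting the defining relation of $\mathbb S_J$ to this block, i.e.\ \eqref{eq:jsj} with one summand, gives $(\lambda I+N)S_i(\tfrac{1}{\lambda^{\star}}I+N)^{\star}=S_i$, hence $S_i^{-1}(\lambda I+N)S_i=(\tfrac{1}{\lambda^{\star}}I+N)^{-\star}$, and the identical relation holds with $\widetilde S_i$ in place of $S_i$ (this is where nonsingularity of $S_i$ and $\widetilde S_i$ is used). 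Combining the two conjugacies, $\widetilde S_iS_i^{-1}$ commutes with $\lambda I+N$, hence with $N$; since $N$ is a single nilpotent Jordan block its commutant is exactly the algebra of polynomials in $N$, i.e.\ the triangular Toeplitz matrices, and so $\widetilde S_iS_i^{-1}$ is such a matrix --- lower triangular in the orientation fixed in Section~\ref{ss}. Taking $\star$ throughout the conjugacies shows that $\widetilde S_i^{\star}S_i^{-\star}$ also commutes with a single nilpotent Jordan block, hence is triangular Toeplitz as well; this disposes of every block of \eqref{ssform}.

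The step I expect to be the crux is the identification ``commutant of a single Jordan block $=$ triangular Toeplitz matrices'', since it is precisely here that the hypothesis bites: if some eigenvalue had geometric multiplicity greater than one, the corresponding $N$ would be block diagonal, its commutant strictly larger, and the conclusion would fail. The rest is bookkeeping, the only delicate part being to keep the \emph{orientations} of the various Toeplitz blocks (and of $N^{\star}$ versus $N$) consistent with the convention of Section~\ref{ss}. As an independent check one may read the statement off Section~\ref{ss} directly: since $S_i$ and $\widetilde S_i$ share the same fixed scaled rotated Pascal factor $P_i$ of \eqref{p}, the block $\widetilde S_iS_i^{-1}$ reduces --- up to conjugation by $P_i$, if it does not cancel outright --- to a quotient of two upper triangular Hankel matrices; such a quotient is triangular Toeplitz because an upper triangular Hankel matrix is the exchange matrix times a triangular Toeplitz matrix and the triangular Toeplitz matrices of a fixed orientation form a commutative group under multiplication, while conjugation by the Pascal matrix $P_i$ also preserves this class --- the latter being a standard property of Pascal matrices, and the main reason I would prefer the commutant route.
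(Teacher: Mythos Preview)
Your proposal is correct and follows essentially the same route as the paper: the block-diagonal form is dispatched by direct calculation, and the Toeplitz property is obtained by showing each block commutes with a single nilpotent Jordan block. The only cosmetic difference is that the paper combines $S_i=J_iS_iJ_i^{\star}$ and $\widetilde S_i=J_i\widetilde S_iJ_i^{\star}$ directly into $J_i^{\star}\,\widetilde S_i^{-1}S_i=\widetilde S_i^{-1}S_i\,J_i^{\star}$ (so $\widetilde S_i^{-1}S_i$ commutes with $N_i^{\top}$, giving \emph{lower} triangular Toeplitz), whereas your intermediate conjugacy $S_i^{-1}J_iS_i=\widetilde S_i^{-1}J_i\widetilde S_i$ yields the dual statement that $\widetilde S_iS_i^{-1}$ commutes with $N_i$ --- the orientation discrepancy you already flagged.
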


\begin{proof}
Direct calculation shows that $\widetilde{S}S^{-1}$ is in the block diagonal form \eqref{ssform}.
Next we only need to show each $\widetilde{S}_i^{-1}S_i$ is a lower triangular Toeplitz matrix.

Using $S,\widetilde{S}\in\mathbb{S}_{J}$, we have $S_i, \widetilde{S}_i\in\mathbb{S}_{J_i}$
and hence, $S_i=J_i S_i J_i^{\star}$ and $\widetilde{S}_i=J_i \widetilde{S}_i J_i^{\star}$.
Then it follows that $J_i^{\star}\widetilde{S}_i^{-1}S_i=  \widetilde{S}_i^{-1}S_i J_i^{\star}$.
Using $J_i=\lambda_i I_{n_i} +N_i$, we get 
$N_i^{\top}\widetilde{S}_i^{-1}S_i=  \widetilde{S}_i^{-1}S_i N_i^{\top}$.  
Comparing the elements on both sides of the above equality, 
we know that $\widetilde{S}_i^{-1}S_i$ is a lower triangular Toeplitz matrix.
\qquad\end{proof}

Using Lemma~\ref{lem:jbd1},
we can assume that the eigenvalues of $\widetilde{S}^{-1}S$, counting multiplicities,
 are
\begin{align}\label{eigss}
\underbrace{\mu_1,\dots,\mu_1}_{n_1},
\underbrace{\mu_1^{\star},\dots,\mu_1^{\star}}_{n_1},\dots,
\underbrace{\mu_{\hat{\ell}},\dots,\mu_{\hat{\ell}}}_{n_{\hat{\ell}}},
\underbrace{\mu_{\hat{\ell}}^{\star},\dots,\mu_{\hat{\ell}}^{\star}}_{n_{\hat{\ell}}},
\underbrace{\mu_{\hat{\ell}+1},\dots,\mu_{\hat{\ell}+1}}_{2n_{\hat{\ell}+1}},
\dots,
\underbrace{\mu_{\hat{t}},\dots,\mu_{\hat{t}}}_{2n_{\hat{t}}},
\end{align}
where $\mu_i\ne \mu_j$ for $i\ne j$, 
$\mu_i\ne \mu_i^{\star}$ for $i=1,\dots,n_{\hat{\ell}}$,
$\mu_i=\mu_i^{\star}$ for $i=n_{\hat{\ell}}+1,\dots,n_{\hat{t}}$.
Now define $\zeta_n(\widetilde{S}S^{-1})=(n_1,n_2,\dots, n_{\hat{t}})$, 
then  $\zeta_n(\widetilde{S}S^{-1})$ is a partition of $n$, i.e., $\sum_{i=1}^{\hat{t}}n_i=n$,
and $\hat{t}$ is the cardinality of $\zeta_n(\widetilde{S}S^{-1})$,
denoted by card$(\zeta_n(\widetilde{S}S^{-1}))$.

\begin{Theorem}\label{thm3.2}
Given $(X,J)\in\mathbb{C}^{n\times 2n}\times \mathbb{C}^{2n\times 2n}$ with 
$\bsmat X\\ -XJ^{-1}\esmat$ nonsingular and $J$ of the PJCF
\footnote{Using similarity transformation, we can show that the assumption that $J$ is of the PJCF can be removed.}.
Assume the geometry multiplicity of each distinct eigenvalue is one.
If there are two nonsingular matrices  $S, \widetilde{S}\in\mathbb{S}_{(X,J)}$ such that
$\zeta_n(\widetilde{S}S^{-1})=(n_1,n_2,\dots, n_{\hat{t}})$, then
for any nonsingular  $\widehat{S}\in\mathbb{S}_{(X,J)}$, 
there exists a nonsingular $K$ such that 
\[
K^{\star}\widehat{Q}(\lambda)K=\diag(Q_1(\lambda),\dots,Q_{\hat{t}}(\lambda)),
\]
where $\widehat{Q}(\lambda)$ is a regular $\star$-(anti)-palindromic system  of the form \eqref{pal}
with coefficient matrices given by \eqref{a1a0} in terms of $X$, $J$ and $\widehat{S}$;
for $i=1,\dots, \hat{t}$, $Q_{i}(\lambda)$ is a regular $\star$-(anti)-palindromic system  of order $n_i$.
Furthermore, the choice of $K$ is independent of $\widehat{S}$.
\end{Theorem}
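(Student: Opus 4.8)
The plan is to reduce everything to the structure theory of $\mathbb{S}_J$ developed in Section~\ref{ss} and the two lemmas just proved. First I would fix the pair $(S,\widetilde{S})$ whose ratio realizes the prescribed partition $\zeta_n(\widetilde{S}S^{-1})=(n_1,\dots,n_{\hat t})$. By Lemma~\ref{lem:jbd2}, $\widetilde{S}S^{-1}$ is block diagonal with blocks $\widetilde{S}_i^{-1}S_i$ (paired as $\widetilde{S}_i^{-\star}S_i^{\star},\widetilde{S}_i^{-1}S_i$ for the $i\le\ell$ part), and each such block is a lower triangular Toeplitz matrix; hence on each Jordan block of $J$ the operator $\widetilde{S}S^{-1}$ acts with a single eigenvalue $\mu$, and these eigenvalues pair up as $(\mu,\mu^{\star})$ by Lemma~\ref{lem:jbd1}. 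The first real step is therefore to build, \emph{once and for all}, a permutation-type similarity $P$ (acting on $T=J$ by conjugation and on $X$ by $X\mapsto XP$) that regroups the Jordan blocks of $J$ according to the distinct eigenvalues $\mu_1,\dots,\mu_{\hat t}$ of $\widetilde{S}S^{-1}$: blocks on which $\widetilde{S}S^{-1}$ has eigenvalue $\mu_i$ (together with the palindromic partner, eigenvalue $\mu_i^{\star}$) are collected into the $i$-th diagonal block $J^{(i)}\in\mathbb{C}^{2n_i\times 2n_i}$. This $P$ depends only on $(X,J,S,\widetilde{S})$, not on $\widehat{S}$ — this is where the final sentence of the theorem comes from.

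Next I would argue that \emph{every} nonsingular $\widehat{S}\in\mathbb{S}_{(X,J)}$ respects this block partition. The key point: since the geometric multiplicity of each distinct eigenvalue of $Q(\lambda)$ is one, any $\widehat{S}\in\mathbb{S}_J$ has the block form \eqref{eq:s} with each $\widehat{S}_i$ a single Pascal–Hankel block, and the constraint $\widehat{S}=J\widehat{S}J^{\star}$ together with the Toeplitz commutation (exactly as in the proof of Lemma~\ref{lem:jbd2}) forces the mixed blocks of $\widehat{S}S^{-1}$ — between Jordan blocks with \emph{different} $\widetilde{S}S^{-1}$-eigenvalues — to vanish. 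Concretely, $\widehat{S}S^{-1}$ commutes with $\widetilde{S}S^{-1}$ modulo the Jordan nilpotents, so it is block diagonal in the $\mu_i$-grouping; equivalently $\widehat{S}$ itself, after conjugating by $P$, is block diagonal $\widehat{S}=\diag(\widehat{S}^{(1)},\dots,\widehat{S}^{(\hat t)})$ with $\widehat{S}^{(i)}$ nonsingular in $\mathbb{S}_{J^{(i)}}$. Then the restricted pair $(X^{(i)},J^{(i)})$ (the corresponding block of $XP$) is a standard pair for the order-$n_i$ system $Q_i(\lambda)$ whose coefficients are given by \eqref{a1a0} applied to $X^{(i)},J^{(i)},\widehat{S}^{(i)}$; regularity of $Q_i$ is immediate because $\widehat{S}^{(i)}$ is nonsingular (Theorem~\ref{thm:iep2n}). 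Finally, tracking the block structure through formulas \eqref{a1a0} for $\widehat{A}_1,\widehat{A}_0$ shows $P^{\top}\widehat{A}_1 P^{-\top}=\diag(\dots)$ etc.; setting $K=P^{-\top}$ (or the appropriate variant so that $K^{\star}\widehat{Q}(\lambda)K$ is the congruence) gives $K^{\star}\widehat{Q}(\lambda)K=\diag(Q_1(\lambda),\dots,Q_{\hat t}(\lambda))$, and $K$ manifestly does not involve $\widehat{S}$.

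The main obstacle will be the second step: proving that the block-diagonal structure dictated by $\widetilde{S}S^{-1}$ is automatically inherited by an \emph{arbitrary} nonsingular $\widehat{S}\in\mathbb{S}_{(X,J)}$, not just by the two reference matrices. This is where the geometric-multiplicity-one hypothesis is essential — it is what guarantees that each $\widehat{S}_i$ is a single Pascal–Hankel block so that the Toeplitz/commutation argument of Lemma~\ref{lem:jbd2} applies verbatim and no "new" coupling between differently-labelled Jordan blocks can appear. One has to be a little careful when two of the original palindromic eigenvalues $\lambda$ of $Q(\lambda)$ happen to land in the same $\mu_i$-group but are themselves a palindromic pair; the bookkeeping between the $(\mu,\mu^{\star})$ pairing of $\widetilde{S}S^{-1}$ and the $(\lambda,1/\lambda^{\star})$ pairing of $Q(\lambda)$ needs to be done so that each $J^{(i)}$ is still "palindromically closed" and the restricted system $Q_i(\lambda)$ is genuinely $\star$-(anti)-palindromic of the same type. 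The remaining steps — verifying \eqref{a1a0} block-decomposes and that $K$ is $\widehat{S}$-independent — are then routine substitution.
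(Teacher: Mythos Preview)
Your plan handles the spectral ($2n$-dimensional) side correctly: the permutation $P$ (the paper calls it $\Pi$) regrouping the Jordan blocks of $J$ by the $\mu_i$ does block-diagonalize $J$, $S$, $\widetilde S$, and, as you argue, every $\widehat S\in\mathbb S_{(X,J)}$ because the structure \eqref{eq:s} is finer than the $\mu_i$-partition. But the argument has a genuine gap on the coefficient ($n$-dimensional) side. Your $P$ is $2n\times 2n$, while $K$ must be $n\times n$; the line ``$P^{\top}\widehat A_1 P^{-\top}=\diag(\dots)$'' and ``setting $K=P^{-\top}$'' do not typecheck. Replacing $X$ by $XP$ and $J,\widehat S$ by their $P$-conjugates leaves $XJ^{-1}\widehat S X^{\star}$ unchanged --- the $P$'s cancel --- so no block structure emerges in $\widehat A_1$ from the permutation alone. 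Nothing in your outline produces a block-diagonal splitting of $X$ itself, and without that the formulas \eqref{a1a0} do not decouple.

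The missing idea is how the paper builds $K$. From \eqref{sswaw} one has the intertwining relation $X\,\widetilde S S^{-1}=\widetilde A_1^{-\star}A_1^{\star}\,X$, where $\widetilde A_1^{-\star}A_1^{\star}$ is an $n\times n$ matrix whose eigenvalues are (half of) the $\mu_i$'s. Choose $K$ so that $K^{-1}\widetilde A_1^{-\star}A_1^{\star}K=\diag(E_{11},\dots,E_{\hat t\hat t})$ with $\lambda(E_{ii})\subset\{\mu_i,\mu_i^{\star}\}$. Writing $K^{-1}X\Pi=[X_{ij}]$, the intertwining relation becomes a family of Sylvester equations $X_{ij}D_{jj}=E_{ii}X_{ij}$ with $\lambda(D_{jj})\cap\lambda(E_{ii})=\emptyset$ for $i\neq j$, forcing $X_{ij}=0$. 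Thus $K^{-1}X\Pi$ is block diagonal, and now \eqref{a1a0} applied blockwise gives $K^{\star}\widehat A_1 K$ and $K^{\star}\widehat A_0 K$ block diagonal. Note that $K$ depends only on $A_1,\widetilde A_1$ (hence on $S,\widetilde S$), not on $\widehat S$, which is what gives the final sentence of the theorem.
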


\begin{proof}
Using $\zeta_n(\widetilde{S}S^{-1})=(n_1,n_2,\dots, n_{\hat{t}})$,
we know that  the eigenvalues of $\widetilde{S}^{-1}S$ are \eqref{eigss}.
Following the proof of Lemma~\ref{lem:jbd1}, we have \eqref{sswaw}.
Then it follows that the eigenvalues of $\widetilde{A}_1^{-1}A_1$ are
\begin{align*}
\underbrace{\hat{\mu}_1,\dots,\hat{\mu}_1}_{n_1},
\dots,
\underbrace{\hat{\mu}_{\hat{\ell}},\dots,\hat{\mu}_{\hat{\ell}}}_{n_{\hat{\ell}}},
\underbrace{\mu_{\hat{\ell}+1},\dots,\mu_{\hat{\ell}+1}}_{n_{\hat{\ell}+1}},
\dots,
\underbrace{\mu_{\hat{t}},\dots,\mu_{\hat{t}}}_{n_{\hat{t}}},
\end{align*}
where $\hat{\mu}_i=\mu_i$ or $\mu_i^{\star}$ for $i=1,\dots,\hat{\ell}$.
Thus, there exists a nonsingular matrix $K$ such that 
\begin{align}\label{kaak}
K^{-1} \widetilde{A}_1^{-\star}A_1^{\star} K=\diag(E_{11},\dots,E_{\hat{t}\hat{t}}),
\end{align}
where $\lambda(E_{ii})=\hat{\mu}_i$ for $i=1,\dots,\hat{\ell}$, 
$\lambda(E_{ii})=\mu_i$ for $i=\hat{\ell}+1,\dots,\hat{t}$.
Using the fact that the eigenvalues of $\widetilde{S}^{-1}S$ are \eqref{eigss} again,
we know that there exists a permutation matrix $\Pi$ such that
\begin{align}\label{pssp}
\Pi^{\top} \widetilde{S}S^{-1} \Pi=\diag(D_{11},\dots, D_{\hat{t}\hat{t}}),
\end{align}
where for $i=1,\dots,\hat{t}$, the eigenvalue set of $D_{ii}$ counting multiplicity is
\[
\lambda(D_{ii})=\{\underbrace{\mu_i,\dots,\mu_i}_{n_i}, \underbrace{\mu_i^{\star},\dots,\mu_i^{\star}}_{n_i}\},
\]
and $\lambda(D_{ii})\cap\lambda(D_{jj})=\emptyset$ for $i\ne j$.
Using Lemma~\ref{lem:jbd2}, we know that $\Pi$ can also be chosen to satisfy 
\begin{align}
\Pi^{\top} S\Pi&=\diag(S_{11},\dots,S_{\hat{t}\hat{t}}),\notag\\
\Pi^{\top}\widetilde{S}\Pi&=\diag(\widetilde{S}_{11},\dots,\widetilde{S}_{\hat{t}\hat{t}}),\label{psp}\\
\Pi^\top J \Pi&=\diag(J_{11},\dots, J_{\hat{t}\hat{t}}),\notag
\end{align}
where for $i=1,\dots,\hat{t}$, $S_{ii}$, $\widetilde{S}_{ii}$ and $J_{ii}$ are all of order $2n_i$.

Now we rewrite \eqref{eigss} as
\begin{align*}
W\widetilde{S}S^{-1}=\diag(\widetilde{A}_1^{-\star}A_1^{\star}, \widetilde{A}_1^{-1}A_1) W,
\end{align*}
and the first $n$ rows become
\begin{align*}
X\widetilde{S}S^{-1}=\widetilde{A}_1^{-\star}A_1^{\star}X.
\end{align*}
Substituting \eqref{pssp} and \eqref{kaak} into the above equality, we get
\begin{align}\label{kxpblock}
K^{-1}X\Pi \diag(D_{11},\dots,D_{\hat{t}\hat{t}})=\diag(E_{11},\dots,E_{\hat{t}\hat{t}})K^{-1}X\Pi.
\end{align}
Partition $K^{-1}X\Pi=[X_{ij}]$, where $X_{ij}\in\mathbb{C}^{n_i\times n_j}$.
Then comparing the $(i,j)$ block of \eqref{kxpblock} on both sides, we have
\[
X_{ij}D_{jj}-E_{ii}X_{ij}=0.
\]
Notice that $\lambda(D_{jj})\cap \lambda(E_{ii})=\emptyset$ for $i\ne j$, thus, $X_{ij}=0$.
Consequently, we can rewrite \eqref{kxpblock} as 
\begin{align}\label{kxp}
K^{-1}X\Pi=\diag(X_{11},\dots,X_{\hat{t}\hat{t}}).
\end{align}

For any nonsingular $\widehat{S}\in\mathbb{S}_{(X,J)}$, we have
\begin{align}\label{phatsp}
\Pi^{\top} \widehat{S}\Pi&=\diag(\widehat{S}_{11},\dots,\widehat{S}_{\hat{t}\hat{t}}).
\end{align}
Then for $\widehat{A}_1$, $\widehat{A}_0$ defined by \eqref{a1a0} in terms of $X$, $J$ and $\widehat{S}$,
using \eqref{kxp}, \eqref{phatsp} and $J$ in \eqref{psp}, we have
\begin{align*}
K^{\star}\widehat{A}_1K
&=\epsilon K^{\star}(XJ^{-1}\widehat{S}X^{\star})^{-1}K\\
&=\epsilon K^{\star} (K\diag(X_{jj})\Pi^{\top} \Pi \diag(J_{jj}^{-1}) \Pi^{\top} \Pi \diag(\widehat{S}_{jj}) \Pi^{\top} \Pi \diag(X_{jj}^{\star})K^{\star})^{-1} K\\
&=\epsilon \diag((X_{jj}J_{jj}^{-1}\widehat{S}_{jj}X_{jj}^{\star})^{-1}),
\end{align*}
similarly,
\begin{align*}
K^{\star} \widehat{A}_0 K= - \diag((X_{jj}J_{jj}^{-1}\widehat{S}_{jj}X_{jj}^{\star})^{-1}
(X_{jj}\widehat{S}_{jj}J_{jj}^{-2}X_{jj}^{\star})
(X_{jj}J_{jj}^{-1}\widehat{S}_{jj}X_{jj}^{\star})^{-1}).
\end{align*}
The conclusion follows.
\qquad \end{proof}

Theorem~\ref{thm3.2} shows that if card$(\zeta_n(\widetilde{S}S^{-1}))=\hat{t}$, then the coefficient matrices can be jointly block diagonalized with $\hat{t}$ diagonal blocks. We then want to show how large can $\hat{t}$ be. Define 
\[
\zeta_n^{\text{opt}}=\zeta_n^{\text{opt}}(\mathbb{S}_{(X,J)})=\text{argmax}\{\text{card}(\zeta_n)|\zeta_n=\zeta_n(\widetilde{S}S^{-1}),S, \widetilde{S}\in\mathbb{S}_{(X,J)} \text{ nonsingular}\}.
\]
Similar as in \cite{cai2015matrix}, we can show that in the definition of $\zeta_n^{\text{opt}}$, $S$ can be fixed as some nonsingular $S_0\in\mathbb{S}_{(X,J)}$, that is,
\[
\zeta_n^{\text{opt}}=\zeta_n^{\text{opt}}(\mathbb{S}_{(X,J)})=\text{argmax}\{\text{card}(\zeta_n)|\zeta_n=\zeta_n(\widetilde{S}S_0^{-1}),\widetilde{S}\in\mathbb{S}_{(X,J)} \text{ nonsingular}\}.
\]
The following theorem characterizes the relationship between card$(\zeta_n^{\text{opt}})$ and the dimension of $\mathbb{S}_{(X,J)}$.

\begin{Theorem}\label{thm3.3}
Given $(X,J)\in\mathbb{C}^{n\times 2n}\times \mathbb{C}^{2n\times 2n}$ with 
$\bsmat X\\ -XJ^{-1}\esmat$ nonsingular and $J$ of the PJCF.
Assume the geometry multiplicity of each distinct eigenvalue is one, it then holds that
\begin{align}\label{dim}
\frac{1}{2}\text{dim}(\mathbb{S}_{(X,J)}) \le \text{card}(\zeta_n^{\text{opt}}) \le \text{dim}(\mathbb{S}_{(X,J)}).
\end{align}
\end{Theorem}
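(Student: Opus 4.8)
The plan is to prove the two inequalities in \eqref{dim} separately. In both parts I fix once and for all a nonsingular $S_0\in\mathbb{S}_{(X,J)}$ (which exists by Theorem~\ref{thm:sp}), let $\widetilde S\in\mathbb{S}_{(X,J)}$ vary over nonsingular matrices, and write $A_1=A_1(S_0)$, $\widetilde A_1=A_1(\widetilde S)$ via \eqref{a1a0} and $M=\widetilde A_1^{-1}A_1$. The first reduction is that, by \eqref{sswaw} and Lemma~\ref{lem:jbd1}, $\text{card}(\zeta_n(\widetilde S S_0^{-1}))$ equals the number $\hat t(\widetilde S)$ of distinct eigenvalues of $M$; since $J$ has the PJCF by hypothesis, $S_0$ automatically has the block form \eqref{eq:s}, which I will use in the second part.

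For the upper bound $\text{card}(\zeta_n^{\text{opt}})\le\dim(\mathbb{S}_{(X,J)})$, take $\widetilde S$ realising $\zeta_n^{\text{opt}}$ and set $S_k:=\widetilde S(S_0^{-1}\widetilde S)^{k-1}$ for $k\ge1$. I would first check $S_k\in\mathbb{S}_{(X,J)}$: it is $\star$-skew since $S_k^{\star}=-\epsilon S_k$; it satisfies $S_k=JS_kJ^{\star}$ by repeated use of $\widetilde S=J\widetilde SJ^{\star}$ and $S_0=JS_0J^{\star}$; and $XS_kX^{\star}=0$ because an induction starting from $W\widetilde SW^{\star}=\bsmat 0 & \widetilde A_1^{-\star}\\ -\epsilon\widetilde A_1^{-1} & 0\esmat$ (obtained inside the proof of Theorem~\ref{thm:sp}) gives $WS_kW^{\star}=\bsmat 0 & (M^{k-1}\widetilde A_1^{-1})^{\star}\\ -\epsilon M^{k-1}\widetilde A_1^{-1} & 0\esmat$, whose $(1,1)$ block vanishes. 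Hence $S_1,\dots,S_r$ are linearly independent exactly when $I,M,\dots,M^{r-1}$ are, i.e. for $r$ up to the degree of the minimal polynomial of $M$; as $M$ has $\hat t:=\text{card}(\zeta_n^{\text{opt}})$ distinct eigenvalues this degree is $\ge\hat t$ (for $\star=*$ one uses in addition that, by the eigenvalue pairing of Lemma~\ref{lem:jbd1}, a nonzero real polynomial vanishing at the $\hat t$ distinct eigenvalues of $M$ has degree at least $\hat t$). Therefore $S_1,\dots,S_{\hat t}\in\mathbb{S}_{(X,J)}$ are independent and $\dim(\mathbb{S}_{(X,J)})\ge\hat t$.

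For the lower bound $\tfrac12\dim(\mathbb{S}_{(X,J)})\le\text{card}(\zeta_n^{\text{opt}})$, I would first record $\mathbb{S}_{(X,J)}=\{\,\Psi(R):R\in\mathcal R\,\}$ with $\Psi(R):=W^{-1}\bsmat 0 & R^{\star}\\ -\epsilon R & 0\esmat W^{-\star}$ and $\mathcal R$ an $n\times n$-matrix subspace of dimension $d:=\dim(\mathbb{S}_{(X,J)})$ whose invertible elements are exactly the $A_1(S)^{-1}$ (the off-diagonal form of $\Psi$ being forced since, by continuity from the nonsingular case, $WSW^{\star}$ has zero diagonal blocks for every $S\in\mathbb{S}_{(X,J)}$). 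Rewriting $S=JSJ^{\star}$ as $\Psi$-invariance under $G\mapsto\widehat JG\widehat J^{\star}$ with $\widehat J:=WJW^{-1}$, the obvious cancellations show $\mathcal R$ is closed under $(R_1,R_2)\mapsto R_1A_1R_2+R_2A_1R_1$, so $(\mathcal R,\;R_1\bullet R_2:=\tfrac12(R_1A_1R_2+R_2A_1R_1))$ is a $d$-dimensional unital Jordan algebra with unit $A_1^{-1}$, inside which $\text{span}\{S_k\}$ is the associative subalgebra generated by $\widetilde A_1^{-1}$. I would then use \eqref{eq:s} to exhibit the parallel family $\widetilde S(\mathbf c)=\diag(c_1(S_0)_1,\dots,c_p(S_0)_p)$, $p$ being the number of spectral components: here $\widetilde S(\mathbf c)S_0^{-1}$ is block-diagonal with scalar blocks, so $M$ is diagonalisable, the admissible $\mathbf c$ (those with $X\widetilde S(\mathbf c)X^{\star}=0$) form a subspace containing $(1,\dots,1)$, and a generic admissible $\mathbf c$ has $q$ distinct entries, where $q$ is the number of ``separable'' components; hence $\text{card}(\zeta_n^{\text{opt}})\ge q$. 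It remains to prove $d\le 2q$, and this is the step I expect to be the main obstacle: one must show that the single constraint $XSX^{\star}=0$, together with the Pascal--Hankel description of $\mathbb{S}_J$ in section~\ref{ss}, forces every element of $\mathbb{S}_{(X,J)}$ to be determined by its ``scalar part'' plus one further datum per separable component — equivalently, in the Peirce decomposition of $\mathcal R$ relative to the idempotent frame produced by the parallel family, the diagonal Peirce pieces contribute at most $q$ and the off-diagonal pieces pair up to contribute at most $q$. I would carry out this count component-group by component-group, mirroring \cite{cai2015matrix} and reducing each local estimate to the nonsingularity of $W$ and the explicit form \eqref{p} of the Pascal matrices.
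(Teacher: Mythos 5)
Your upper bound is fine, and in fact it is a different (and self-contained) argument from the paper's: the powers $S_k=\widetilde S(S_0^{-1}\widetilde S)^{k-1}$ do lie in $\mathbb{S}_{(X,J)}$ (the anti-diagonal form of $WS_kW^{\star}$ you compute is correct, and $\star$-skewness and $S_k=JS_kJ^{\star}$ follow by the cancellations you indicate), and their independence up to $k=\mathrm{card}(\zeta_n^{\text{opt}})$ follows from the minimal-polynomial degree of $M=\widetilde A_1^{-1}A_1$. The paper instead obtains both inequalities at once from a single block decomposition, so your route for $\mathrm{card}(\zeta_n^{\text{opt}})\le\dim$ is a genuine alternative.

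The problem is the other inequality, $\tfrac12\dim(\mathbb{S}_{(X,J)})\le\mathrm{card}(\zeta_n^{\text{opt}})$, which is the substantive half of the theorem, and your proposal does not prove it: you yourself flag the key counting step ($d\le 2q$ via a Peirce decomposition of the Jordan algebra $(\mathcal R,\bullet)$) as ``the main obstacle'' and only describe how you hope to carry it out. As written this is a gap, not a proof; moreover $d\le 2q$ with $q$ defined through the parallel family $\widetilde S(\mathbf c)$ is a \emph{stronger} statement than what is needed (since $\mathrm{card}(\zeta_n^{\text{opt}})\ge q$ but the optimum need not be attained by block-scalings of $S_0$), so you are steering toward a harder target without an argument for it. The paper's proof supplies exactly the missing idea, and it is short: pick $S$ with $\zeta_n(SS_0^{-1})=\zeta_n^{\text{opt}}$ and apply Theorem~\ref{thm3.2} to the pair $(S_0,S)$, which jointly block-diagonalizes $X$, $J$ (and $A_1$) into $t=\mathrm{card}(\zeta_n^{\text{opt}})$ sub-systems; then $\dim(\mathbb{S}_{(X,J)})=\sum_j\dim(\mathbb{S}_{(X_{jj},J_{jj})})$ and, by maximality of $\zeta_n^{\text{opt}}$, each sub-system has $\mathrm{card}(\zeta^{\text{opt}}(\mathbb{S}_{(X_{jj},J_{jj})}))=1$; finally each block has $\dim\le 2$, because if $S^{(0)}_{jj},S^{(1)}_{jj},S^{(2)}_{jj}$ were independent one could form $S_{jj}=\frac{1}{\beta_1}(S^{(1)}_{jj}-\alpha_1S^{(0)}_{jj})+\frac{2}{\beta_2}(S^{(2)}_{jj}-\alpha_2S^{(0)}_{jj})$, whose eigenvalues relative to $S^{(0)}_{jj}$ are $\pm i$ and $\pm 3i$, contradicting cardinality one for that block. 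Your Jordan-algebra closure observation ($\mathbb{S}_{(X,J)}$ is closed under $S_1S_0^{-1}S_2+S_2S_0^{-1}S_1$) is correct and potentially useful, but without the per-block dimension bound it does not yield \eqref{dim}; to complete your write-up you would either have to carry out the Peirce-type count in full or replace it by the block-splitting argument above.
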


\begin{proof}
For the fixed nonsingular $S_0\in\mathbb{S}_{(X,J)}$, assume that the nonsingular $S\in\mathbb{S}_{(X,J)}$ is chosen such that $\zeta_n(SS_0^{-1})=\zeta_n^{\text{opt}}=(n_1,\cdots,n_t)$. Then from the proof of Theorem~\ref{thm3.2} we know that there exist a nonsingular matrix $K$ and a permutation matrix $\Pi$ such that
\[
K^{-1}X\Pi=\diag(X_{11},\dots,X_{tt}),\quad \Pi^\top J \Pi=\diag(J_{11},\dots, J_{tt}),
\]
where $X_{jj}\in\mathbb{C}^{n_j\times 2n_j},J_{jj}\in\mathbb{C}^{2n_j\times 2n_j}$, and for $A_1$ defined as in \eqref{a1a0} using $X,J,S$, 
\[
K^\star A_1 K=\diag (A_{11},\cdots,A_{1t}),
\]
where $A_{1j}\in\mathbb{C}^{n_j\times n_j}$. Similar to $\mathbb{S}_{(X,J)}$ and $\zeta_n^{\text{opt}}(\mathbb{S}_{(X,J)})$, we can define $\mathbb{S}_{(X_{jj},J_{jj})}$ and $\zeta_n^{\text{opt}}(\mathbb{S}_{(X_{jj},J_{jj})})$, and then 
\[
\text{dim}(\mathbb{S}_{(X,J)})=\sum_{j=1}^t\text{dim}(\mathbb{S}_{(X_{jj},J_{jj})}),\quad \text{card}\left(\zeta_n^{\text{opt}}(\mathbb{S}_{(X_{jj},J_{jj})})\right)=1.
\]
Define $W_{jj}=\begin{bmatrix} X_{jj}\\ -X_{jj}J_{jj}^{-1}\end{bmatrix}$ and $S_{jj}^{(0)}=W_{jj}^\star \begin{bmatrix}0 & -\epsilon A_{1j} \\ A_{1j}^{\star} & 0\end{bmatrix} W_{jj}$, then for any $S_{jj}\in\mathbb{S}_{(X_{jj},J_{jj})}$, the eigenvalues of $S_{jj}(S_{jj}^{(0)})^{-1}$ are either the same $\lambda$ satisfying $\lambda=\lambda^\star$ or a pair of distinct $\lambda$ and $\lambda^\star$. In the former case, $S_{jj}=\lambda S_{jj}^{(0)}$ and hence $\text{dim}(\mathbb{S}_{(X_{jj},J_{jj})})=1$. While in the latter case, if $\text{dim}(\mathbb{S}_{(X_{jj},J_{jj})})>2$, there exist $S_{jj}^{(1)},S_{jj}^{(2)}\in\mathbb{S}_{(X_{jj},J_{jj})}$ such that $S_{jj}^{(k)}(k=0,1,2)$ are linearly independent. Assume that the eigenvalues of $S_{jj}^{(k)}(S_{jj}^{(0))^{-1}}(k=1,2)$ are $\alpha_k\pm i\beta_k (\beta_k\ne 0)$. Let 
\[
S_{jj}=\frac{1}{\beta_1}(S_{jj}^{(1)}-\alpha_1S_{jj}^{(0)})+\frac{2}{\beta_2}(S_{jj}^{(2)}-\alpha_2S_{jj}^{(0)}),
\]
then $S_{jj}\in\mathbb{S}_{(X_{jj},J_{jj})}$ and the eigenvalues of $S_{jj}(S_{jj}^{(0))^{-1}}$ are $\pm 3i$ and $\pm i$. Thus card$(S_{jj}(S_{jj}^{(0))^{-1}})$=2, which contradicts $\text{card}\left(\zeta_n^{\text{opt}}(\mathbb{S}_{(X_{jj},J_{jj})})\right)=1$. So in all $\text{dim}(\mathbb{S}_{(X_{jj},J_{jj})})\le 2$ and hence \eqref{dim} holds.
\end{proof}

Several remarks follow in order.
\begin{enumerate}
\item
By Theorem~\ref{thm3.3}, we know that
when $\text{dim}(\mathbb{S}_{(X,\Lambda)})>3$,
it holds  $t=\text{card}(\zeta_n^{\text{opt}})\ge 2$.
Then it follows from Theorem~\ref{thm3.2} that the coefficient matrices of the quadratic palindromic matrix polynomial can be joint block diagonalized, and the resulting block diagonal matrices has exactly $t$ blocks.

\item
From \cite{cai2009quadratic} to \cite{cai2015matrix},
the authors generalize the results on joint block diagonalization 
of the coefficient matrices of self-adjoint matrix polynomial 
from the quadratic case to high order case.
Similarly, we can generalize the theorems in this subsection to high order palindromic matrix polynomial,
which can be used to solve the general joint block diagonalization problem
of a general matrix set (the matrices in the set are not necessarily Hermitian as in \cite{cai2015matrix}).
The detailed discussions and results will be presented in a separate paper.

\end{enumerate}

\subsection{Case $1\le k< 2n$}
In what follows, we try to give a uniform solution to the IEP-QP($k$). 
We make the following assumptions:
the remaining $2n-k$ eigenvalues of the constructed $Q(\lambda)$ do not intersect the eigenvalues of $T_1$,
and $T_1$ is similar to $T_1^{-\star}$.
The assumption that $T_1$ is similar to $T_1^{-\star}$ amounts to require that eigenvalues of $T_1$ occur in pairs $(\lambda,1/\lambda^{\star})$, and the (algebraic, geometric and partial) multiplicities of eigenvalues in each pair are equal. IEP-QP($k$) with these assumptions is referred to as the IEP-QP($k$)-A hereafter.
The following theorem characterizes the solvability and solutions to IEP-QP($k$)-A.

\begin{Theorem}\label{thm:iepk}
The IEP-QP($k$)-A has a regular solution $Q(\lambda)$ if and only if 
there exist a nonsingular $S_1\in\mathbb{S}_{T_1}$ and a  matrix $\Psi\in\mathbb{C}^{n\times (2n-k)}$
such that $X_1T_1^{-1}S_1X_1^{\star}+Y\Psi\widehat{T}_2^{-1} \Omega\Psi^{\star}Y^{\star}$ is nonsingular and
\begin{align}\label{psi}
\Psi \Omega \Psi^{\star}=-{\Delta},
\end{align}
where $Y$ and $\Delta$ are the $Y$ and $\Delta$ factors of $X_1S_1X_1^{\star}$, respectively,
$\Omega\in\mathbb{C}^{(2n-k)\times (2n-k)}$ is nonsingular, 
and $\Omega=\Delta_{n-p,n-q}$ for $\star=*$ with $p$, $q$ the  positive and negative inertia indices of $\sqrt{-\epsilon}S_1$, respectively,
$\Omega=\Delta_{2n-k}$ for $\star=\top$,
$\widehat{T}_2\in\mathbb{C}^{(2n-k)\times (2n-k)}$ satisfies $\widehat{T}_2\Omega \widehat{T}_2^{\star}=\Omega$.
In such case, the coefficient matrices $A_1$, $A_0$ of $Q(\lambda)$ are given by \eqref{a1a0}
in terms of $X$, $T$ and $S$, where
$X=[X_1 \,\, Y\Psi ]$, $T=\diag(T_1, \widehat{T}_2)$ and $S=\diag(S_1, \Omega)$.
\end{Theorem}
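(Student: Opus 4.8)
The plan is to reduce the IEP-QP($k$)-A to the already-solved case $k=2n$ (Theorem~\ref{thm:iep2n}) by completing the given partial standard pair $(X_1,T_1)$ to a full standard pair $(X,T)$ of some regular $\star$-(anti)-palindromic system. The natural Ansatz, dictated by the block structure \eqref{eq:s} of admissible parameter matrices and by the fact that the completion must not disturb the prescribed data, is to take $T=\diag(T_1,\widehat T_2)$ with $\widehat T_2\Omega\widehat T_2^{\star}=\Omega$, $X=[X_1\ \ X_2]$ with $X_2$ to be determined, and correspondingly a block-diagonal $S=\diag(S_1,S_2)$. With these choices the condition $S\in\mathbb{S}_{(X,T)}$ splits into $S_1\in\mathbb{S}_{T_1}$, $S_2\in\mathbb{S}_{\widehat T_2}$ together with the single coupling equation $XSX^{\star}=X_1S_1X_1^{\star}+X_2S_2X_2^{\star}=0$, and the regularity requirement on $Q(\lambda)$ becomes, by \eqref{a1a0}, nonsingularity of $X T^{-1}SX^{\star}=X_1T_1^{-1}S_1X_1^{\star}+X_2\widehat T_2^{-1}S_2X_2^{\star}$.

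First I would establish necessity. Given a regular solution $Q(\lambda)$, Theorem~\ref{thm:sp} furnishes a parameter matrix for the full standard pair; the assumption that the $2n-k$ new eigenvalues miss the spectrum of $T_1$ lets me invoke the block structure of $\mathbb{S}_T$ (Section~\ref{ss}) to conclude that, after an appropriate basis change of the complementary invariant subspace, the parameter matrix is genuinely block-diagonal $\diag(S_1,S_2)$ with $S_1$ nonsingular in $\mathbb{S}_{T_1}$. I then pass to the $\star$-factorization $X_1S_1X_1^{\star}=Y\Delta Y^{\star}$ of \eqref{ydy}; writing $X_2=Y\Psi$ and reading off the inertia of $S_2$ forces the $\Delta$-factor of $S_2$ to be exactly the complementary $\Omega$ (here the count $p+q$ versus $n$, resp. $2n-k$, pins down $\Omega=\Delta_{n-p,n-q}$ for $\star=*$ and $\Omega=\Delta_{2n-k}$ for $\star=\top$), so after absorbing the $Y$-factor of $S_2$ we may take $S_2=\Omega$ outright. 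The equation $XSX^{\star}=0$ then becomes precisely $Y\Delta Y^{\star}+Y\Psi\Omega\Psi^{\star}Y^{\star}=0$, i.e.\ \eqref{psi}, since $Y$ is nonsingular; and the regularity condition becomes nonsingularity of $X_1T_1^{-1}S_1X_1^{\star}+Y\Psi\widehat T_2^{-1}\Omega\Psi^{\star}Y^{\star}$.

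For sufficiency I would run this backwards: given $S_1$, $\Psi$, $\Omega$, $\widehat T_2$ with the stated properties, set $X=[X_1\ \ Y\Psi]$, $T=\diag(T_1,\widehat T_2)$, $S=\diag(S_1,\Omega)$. One checks $S\in\mathbb{S}_T$ blockwise ($S_1\in\mathbb{S}_{T_1}$ by hypothesis, $\Omega\in\mathbb{S}_{\widehat T_2}$ from $\widehat T_2\Omega\widehat T_2^{\star}=\Omega$ and $\Omega^{\star}=-\epsilon\Omega$, which holds for each listed $\Omega$), and $XSX^{\star}=X_1S_1X_1^{\star}+Y\Psi\Omega\Psi^{\star}Y^{\star}=Y(\Delta+\Psi\Omega\Psi^{\star})Y^{\star}=0$ by \eqref{psi}, so $S\in\mathbb{S}_{(X,T)}$; $S$ is nonsingular since its blocks are. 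It remains to verify that $W=\bsmat X\\ -XT^{-1}\esmat$ is nonsingular: this is where the regularity hypothesis enters, since by the calculation in the proof of Theorem~\ref{thm:sp} one has $XT^{-1}SX^{\star}=\epsilon A_1^{-1}$ up to the identification made there, and nonsingularity of $X_1T_1^{-1}S_1X_1^{\star}+Y\Psi\widehat T_2^{-1}\Omega\Psi^{\star}Y^{\star}=XT^{-1}SX^{\star}$ forces $W$ to have full rank (a rank-deficient $W$ would make this matrix singular). Then Theorem~\ref{thm:iep2n} applies to $(X,T)$ and the constructed $S$, yielding the regular solution with coefficients \eqref{a1a0}; and since $(X_1,T_1)$ sits as the first block, \eqref{eq:iep} holds.

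The main obstacle I anticipate is the necessity direction's normalization step: showing that the parameter matrix of an arbitrary regular solution can be brought to the block-diagonal form $\diag(S_1,\Omega)$ with $S_2$ reduced to exactly $\Omega$, rather than merely to some nonsingular $S_2\in\mathbb{S}_{\widehat T_2}$. This requires combining the spectral-disjointness assumption (to get the block-diagonal shape of $S$), the $\star$-factorization (to normalize the $Y$-factor), and an inertia bookkeeping argument to match the complementary signature — and it is here that the precise definitions of $\Delta_{p,q}$ and $\Delta_t$ in \eqref{Delta2}, together with $p=q=n$ for the full $S$ established after \eqref{Delta2n}, do the real work.
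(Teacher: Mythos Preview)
Your proposal is correct and follows essentially the same route as the paper: both directions reduce to Theorem~\ref{thm:iep2n} via the block-diagonal completion $T=\diag(T_1,\widehat T_2)$, $S=\diag(S_1,\Omega)$, with the $\star$-factorizations of $X_1S_1X_1^{\star}$ and of $S_2$ supplying the normalization and the inertia bookkeeping pinning down $\Omega$. The one place your phrasing is slightly loose---inferring $W$ nonsingular from nonsingularity of $XT^{-1}SX^{\star}$---is handled just as tersely in the paper (which simply cites \eqref{s}); both are shorthand for the direct computation of $WSW^{\star}$, whose off-diagonal blocks are $\pm XT^{-1}SX^{\star}$ and whose diagonal blocks vanish.
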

%

\begin{proof}
{\em Necessary.} If IEP-QP($k$)-A has a regular solution $Q(\lambda)$,
then there exist $X_2$ and $T_2$ such that $(X,T)=([X_1\,\, X_2], \diag(T_1,T_2))$ forms a stand pair of $Q(\lambda)$.
Define the parameter matrix $S$ as in \eqref{s}.
By the assumption that $\lambda(T_1)\cap\lambda(T_2)=\emptyset$ and $T_1$ is similar to $T_1^{-\star}$,
we know that $S$ is in a block diagonal form $S=\diag(S_1,S_2)$, 
where $S_1\in\mathbb{C}^{k\times k}$, $S_2\in\mathbb{C}^{(2n-k)\times(2n-k)}$.
By Theorem~\ref{thm:sp}, we have $S\in\mathbb{S}_{(X,T)}$.
Then it follows that $S_1\in\mathbb{S}_{T_1}$, $S_2\in\mathbb{S}_{T_2}$ and 
\begin{equation}\label{x1s1x1x2s2x2}
XSX^{\star}=X_1S_1X_1^{\star}+X_2S_2X_2^{\star}=0.
\end{equation}
Let the $\star$-factorization of $X_1S_1X_1^{\star}$ be $X_1S_1X_1^{\star}=Y\Delta Y^{\star}$,
the $\star$-factorization of $S_2$ be $S_2=G\Omega G^{\star}$.
Then there exists a $\Psi\in\mathbb{C}^{n\times (2n-k)}$ such that $X_2$ as $X_2=Y \Psi G^{-1}$.
Let $\widehat{T}_2=G^{-1} T_2 G$, then it follows from $S_2\in\mathbb{S}_{T_2}$ that
\[
\widehat{T}_2\Omega \widehat{T}_2^{\star}
=G^{-1} T_2 G \Omega G^{\star} T_2^{\star} G^{-\star}
=G^{-1} T_2 S_2 T_2^{\star} G^{-\star}
=G^{-1} S_2  G^{-\star}
=\Omega.
\]

According to Theorem~\ref{thm:sp}, we have $\epsilon A_1^{-1}=XT^{-1}SX^{\star}$, which is nonsingular.
Therefore, the matrix
\begin{align*}
XT^{-1}SX^{\star}
&=X_1T_1^{-1}S_1X_1^{\star}+X_2T_2^{-1}S_2X_2^{\star}\\
&=X_1T_1^{-1}S_1X_1^{\star}+ (Y \Psi G^{-1})( G\widehat{T}_2^{-1} G^{-1}) (G\Omega G^{\star})(Y \Psi G^{-1})^{\star}\\
&=X_1T_1^{-1}S_1X_1^{\star}+ Y \Psi \widehat{T}_2^{-1} \Omega \Psi^{\star} Y^{\star}
\end{align*}
is nonsingular.

Now using the expression of $X_2$ and the $\star$ factorizations of $X_1S_2X_1^{\star}$ and $S_2$, 
we have from \eqref{x1s1x1x2s2x2} that
\[
\Psi \Omega \Psi^{\star}=\Psi G^{-1} G\Omega G^{\star} G^{-\star} \Psi^{\star}
= Y^{-1} X_2S_2X_2^{\star} Y^{-\star}=-Y^{-1} X_1S_1X_1^{\star}Y^{-\star}=-\Delta.
\]

The matrix $\Omega$ is nonsingular since $S_2$ is.
That $\Omega=\Delta_{n-p,n-q}$ for $\star=*$, $\Omega=\Delta_{2n-k}$ for $\star=\top$
comes from the fact that the direct sum of $\Omega$ and the $\Delta$ factor of $S_1$ equals to $\widehat{\Delta}$
up to a permutation,
where $\widehat{\Delta}$ is defined in \eqref{Delta2n}.

{\em Sufficiency.}
First, it is easy to see that $S=\diag(S_1,\Omega)$ is nonsingular since both $S_1$ and $\Omega$ are.
Recall the definition of $\star$-factorization, we know that $\Omega^{\star}=-\epsilon \Omega$.
Then it follows that  $S^{\star}=-\epsilon S$.
By calculation, we have 
\begin{align*}
TST^{\star}
=\diag(T_1S_1T_1^{\star},  \widehat{T}_2 \Omega \widehat{T}_2^{\star} )
=\diag(S_1, \Omega)=S,
\end{align*}
and also
\begin{align*}
XSX^{\star}=Y(\Delta + \Psi \Omega  \Psi^{\star}) Y^{\star}=0.
\end{align*}
Therefore, we have $S\in\mathbb{S}_{(X,T)}$.

Noticing that $XT^{-1}SX^{\star}=X_1T_1S_1X_1^{\star}+ Y \Psi \widehat{T}_2^{-1} \Omega \Psi^{\star} Y^{\star}$ is nonsingular,
we can define $A_1$ and $A_0$ as in \eqref{a1a0}.
Using \eqref{s}, we know that $W=\bsmat X\\ -XT^{-1}\esmat$ is nonsingular since $S$ is.
By Theorem~\ref{thm:iep2n}, a regular solution to IEP-QP($k$)-A can be given by \eqref{a1a0} in terms of $X$, $T$ and $S$.
This completes the proof.
\qquad\end{proof}

By Theorem~\ref{thm:iepk}, we can construct a regular solution to  the IEP-QP($k$)-A as in the following Algorithm~\ref{alg1}.

\begin{algorithm}[htb]\label{alg1}
\KwIn{$(X_1,T_1)\in\mathbb{C}^{n\times k}\times \mathbb{C}^{k\times k}$.}

\KwOut{$A_1$ and $A_0$.}

\caption{Solving IEP-QP($k$)-A}

\BlankLine

\nl Find a nonsingular $S_1\in\mathbb{S}_{T_1}$.

\nl If $\star=*$, compute the positive and negative indices of $\sqrt{-\epsilon}S_1$, denoted by $p$ and $q$, respectively.

\nl Compute the $\star$-factorization of $X_1S_1X_1^{\star}$, denoted by $X_1S_1X_1^{\star}=Y\Delta Y^{\star}$.

\nl Set $\Omega=\Delta_{n-p,n-q}$ for $\star=*$, $\Omega=\Delta_{2n-k}$ for $\star=\top$.

\nl Determine a $\widehat{T}_2$ satisfying $\widehat{T}_2 \Omega \widehat{T}_2^{\star}=\Omega$.

\nl Find a $\Psi$ satisfying \eqref{psi}.

\nl If $X_1T_1^{-1}S_1X_1^{\star}+ Y \Psi \widehat{T}_2^{-1} \Omega \Psi^{\star} Y^{\star}$ is nonsingular, 
compute $A_1$ and $A_0$ as in \eqref{a1a0} in terms of $X=[X_1 \, Y\Psi ]$, $T=\diag(T_1, \widehat{T}_2)$ and $S=\diag(S_1, \Omega)$.
\end{algorithm}

Several remarks follow in order.
\begin{enumerate}
\item[1.] In Step 1, if $T_1$ is of the PJCF, by the discussion  in Section~\ref{ss},
the parameter solution of $S_1$ can be easily obtained, 
from which a nonsingular one can be chosen if there exists;
\item[2.] In Step 5, by arranging $\Omega$ as in \eqref{simples}, a $\widehat{T}_2$ can be easily obtained from \eqref{simplel};
\item[3.] In Step 6, for the case $\star=*$, it must hold that 
\[
n_-(\sqrt{-\epsilon}\Omega)\ge n_+(\sqrt{-\epsilon}\Delta),\qquad
n_+(\sqrt{-\epsilon}\Omega)\ge n_-(\sqrt{-\epsilon}\Delta).
\] 
Otherwise, \eqref{psi} has no solution.
Here $n_+(\cdot)$ and $n_-(\cdot)$ denote the positive and negative indices of a Hermitian matrix, respectively. 
We can of course pursue the general solutions to \eqref{psi}, 
which needs detailed discussions and makes our main idea obscure.
Instead, we can find a $\Psi$ satisfying \eqref{psi} as follows.
If $k\le n$, let $B$ be a nonsingular matrix of order $2n-k$. 
We compute the $\star$-factorization $B\Omega B^{\star}=Y_B \Delta_B Y_B^{\star}$. 
Then let $B:=Y_B^{-1} B$, $\Psi$ can be given by 
\begin{equation}\label{psi1}
\Psi=
\left\{\begin{array}{lll}
\bsmat 0& \Theta_1^* \\ \Theta_2^* & 0\esmat B,
& 
\begin{array}{ll}
\hat{p}=n_+(\sqrt{-\epsilon}\Delta_B),  \Theta_1\in\mathbb{C}^{\hat{p}\times p},  \Theta_1^* \Theta_1=I_p,  \\
\hat{q}=n_-(\sqrt{-\epsilon}\Delta_B),   \Theta_2\in\mathbb{C}^{\hat{q}\times q},  \Theta_2^* \Theta_2=I_q,
\end{array}
&\mbox{if } \star=*, \epsilon=\pm 1; \\
\bsmat 0 & \Theta^{\top}\\ \Theta^{\top} & 0\esmat B , 
&\Theta \in\mathbb{C}^{t/2\times (n-k/2)},\, \Theta^\top \Theta=I,
&\mbox{if } \star=\top, \epsilon=1;
\\
\imath\,\Theta^\top B, 
& \Theta \in\mathbb{C}^{t\times (2n-k)},\, \Theta^\top \Theta=I,
&\mbox{if } \star=\top, \epsilon=-1.
\end{array}\right.
\end{equation}

If $k>n$, let $B\in\mathbb{C}^{n\times (2n-k)}$ be of full column rank.
We compute the $\star$-factorization $B\Omega B^{\star}=Y_B \Delta_B Y_B^{\star}$.
Then let $B:=Y_B^{\dagger} B$, $\Psi$ can also be given by \eqref{psi1}.
\end{enumerate}
%

%
%
%

\section{Solving MUP-QP}\label{eep}
The model updating problem with no-spillover of the quadratic $\star$-(anti)-palindromic system (MUP-QP) can be phrased as follows.

\medskip
{\em\textbf{MUP-QP}: Given a regular $\star$-(anti)-palindromic  system $Q(\lambda)$, 
and some of its eigenpairs $\{(\lambda_j,x_j)\}_{j=1}^k$, 
update $Q(\lambda)$ to another regular $\star$-(anti)-palindromic  system 
$\widetilde{Q}(\lambda)=\lambda^2 \widetilde{A}_1^{\star}+\lambda \widetilde{A}_0+\epsilon \widetilde{A}_1$,
such that $\{\lambda_j\}_{j=1}^k$ are replaced by $\{\tilde{\lambda}_j\}_{j=1}^k$,
meanwhile the remaining $2n-k$ eigenpairs are kept unchanged.}

\medskip
Let $(X_1,T_1)\in\mathbb{C}^{n\times k}\times \mathbb{C}^{k\times k}$, 
$(X_2,T_2)\in\mathbb{C}^{n\times (2n-k)}\times \mathbb{C}^{(2n-k)\times (2n-k)}$ 
be the invariant standard pairs \cite{Betcke2011perturbation} of $Q(\lambda)$ associated with 
$\{(\lambda_j,x_j)\}_{j=1}^k$ and  the remaining $2n-k$ eigenpairs, respectively.
Let $\widetilde{T}_1=\diag(\tilde{\lambda}_1, \tilde{\lambda}_2,\dots, \tilde{\lambda}_k)$,
$\widetilde{X}_1=[\tilde{x}_1 \,\tilde{x}_2 \, \dots \tilde{x}_k]\in\mathbb{C}^{n\times k}$,
where for $j=1,2,\dots, k$, $\tilde{x}_j$ is the eigenvector of $\widetilde{Q}(\lambda)$ corresponding to $\tilde{\lambda}_j$.
Let $X=[X_1 \, X_2]$, $\widetilde{X}=[\widetilde{X}_1 \, X_2]$,
$T=\diag(T_1,T_2)$, $\widetilde{T}=\diag(\widetilde{T}_1,T_2)$.
Then for the original system $Q(\lambda)$, we know that $(X,T)$ is a standard pair of $Q(\lambda)$.
The MUP-QP amounts to find a regular $\star$-(anti)-palindromic  system $\widetilde{Q}(\lambda)$ such that
$(\widetilde{X},\widetilde{T})$ is a standard pair of $\widetilde{Q}(\lambda)$.
%
%


In current literature solving the MUP for symmetric systems, 
the number of unwanted eigenvalues $k$ is usually restricted to be no more than than $n$, 
and the range space spanned by the columns of $\widetilde{X}_1$ is assumed to be in that of $X_1$. 
In what follows we will not make such assumptions, which makes our solutions more general.

\begin{Theorem}\label{thm:eep}
With the notations above, assume that $T_1$ is similar to $T_1^{-\star}$ and $\lambda(T_1)\cap\lambda(T_2)=\emptyset$.
Let 
\begin{align}\label{sss111}
S_1=(\epsilon X_1^{\star}A_1X_1T_1^{-1}-T_1^{-\star}X_1^{\star}A_1^{\star}X_1)^{-1}.
\end{align}
If there exist a nonsingular $\widetilde{S}_1\in\mathbb{S}_{\widetilde{T}_1}$ and a solution $\widetilde{X}_1$ to 
\begin{equation}\label{xsxxsx}
\widetilde{X}_1\widetilde{S}_1\widetilde{X}_1^{\star}=X_1S_1X_1^{\star}
\end{equation}
satisfying that $\Xi:=I_{\ell}+\epsilon Z_2^{\star}A_1Z_1$ is nonsingular,
where $Z_1, Z_2\in\mathbb{C}^{n\times \ell}$ are determined by the rank factorization
$\widetilde{X}_1\widetilde{T}_1^{-1}\widetilde{S}_1\widetilde{X}_1^{\star}  -  X_1T_1^{-1}S_1X_1^{\star}=Z_1Z_2^{\star}$,
then a regular solution to the MUP-QP can be given as
\begin{subequations}\label{a1a0eep}
\begin{align}
\widetilde{A}_1&=A_1-\epsilon A_1 Z_1\Xi^{-1}Z_2^{\star}A_1,\label{a1tilde}\\
\widetilde{A}_0&=(I-\epsilon A_1 Z_1 \Xi^{-1}Z_2^{\star}) 
(A_0-A_1 \Upsilon A_1) 
(I-\epsilon Z_1 \Xi^{-1}Z_2^{\star}A_1),\label{a0tilde}
\end{align}
\end{subequations}
where $\Upsilon=\widetilde{X}_1\widetilde{T}_1^{-2}\widetilde{S}_1\widetilde{X}_1^{\star}-X_1T_1^{-2}S_1X_1^{\star}$.
\end{Theorem}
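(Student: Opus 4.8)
The plan is to realize the updated system as a solution of an associated IEP-QP$(2n)$ and then read off \eqref{a1a0eep} from the spectral decomposition of Theorem~\ref{thm:sp}. First I would apply Theorem~\ref{thm:sp} to the given regular $Q(\lambda)$ with its standard pair $(X,T)=([X_1\ X_2],\diag(T_1,T_2))$: the parameter matrix $S=(W^{\star}LJ_{\epsilon}L^{\star}W)^{-1}$ is nonsingular, lies in $\mathbb{S}_{(X,T)}$, and a direct computation from \eqref{s}--\eqref{WLJ} gives $S^{-1}=\epsilon X^{\star}A_1XT^{-1}-T^{-\star}X^{\star}A_1^{\star}X$. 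Because $\lambda(T_1)\cap\lambda(T_2)=\emptyset$ and $T_1$ is similar to $T_1^{-\star}$, the argument used in the proof of Theorem~\ref{thm:iepk} gives $S=\diag(S_1,S_2)$ with $S_1\in\mathbb{S}_{T_1}$, $S_2\in\mathbb{S}_{T_2}$ both nonsingular; comparing the $(1,1)$ blocks identifies $S_1$ with \eqref{sss111}. In particular $X_1S_1X_1^{\star}+X_2S_2X_2^{\star}=XSX^{\star}=0$ and $X_1T_1^{-1}S_1X_1^{\star}+X_2T_2^{-1}S_2X_2^{\star}=XT^{-1}SX^{\star}=\epsilon A_1^{-1}$.

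Next I would set $\widetilde{X}=[\widetilde{X}_1\ X_2]$, $\widetilde{T}=\diag(\widetilde{T}_1,T_2)$, $\widetilde{S}=\diag(\widetilde{S}_1,S_2)$ and check that $\widetilde{S}$ is nonsingular with $\widetilde{S}\in\mathbb{S}_{(\widetilde{X},\widetilde{T})}$. The identities $\widetilde{S}^{\star}=-\epsilon\widetilde{S}$ and $\widetilde{S}=\widetilde{T}\widetilde{S}\widetilde{T}^{\star}$ hold blockwise from $\widetilde{S}_1\in\mathbb{S}_{\widetilde{T}_1}$ and $S_2\in\mathbb{S}_{T_2}$, while
\[
\widetilde{X}\widetilde{S}\widetilde{X}^{\star}=\widetilde{X}_1\widetilde{S}_1\widetilde{X}_1^{\star}+X_2S_2X_2^{\star}=X_1S_1X_1^{\star}-X_1S_1X_1^{\star}=0
\]
by \eqref{xsxxsx} and the previous paragraph. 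This is where no-spillover enters: retaining the block $(X_2,T_2,S_2)$ unchanged is exactly what preserves the $2n-k$ untouched eigenpairs, and \eqref{xsxxsx} is the compatibility condition making the old $S_2$ still a valid parameter block after $T_1$ is swapped for $\widetilde{T}_1$.

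The crucial step is the nonsingularity of $\widetilde{W}=\bsmat\widetilde{X}\\ -\widetilde{X}\widetilde{T}^{-1}\esmat$. Subtracting the two descriptions of $XT^{-1}SX^{\star}$ above and inserting the rank factorization of the theorem,
\[
\widetilde{X}\widetilde{T}^{-1}\widetilde{S}\widetilde{X}^{\star}=\epsilon A_1^{-1}+\bigl(\widetilde{X}_1\widetilde{T}_1^{-1}\widetilde{S}_1\widetilde{X}_1^{\star}-X_1T_1^{-1}S_1X_1^{\star}\bigr)=\epsilon A_1^{-1}+Z_1Z_2^{\star}=\epsilon A_1^{-1}(I_n+\epsilon A_1Z_1Z_2^{\star}).
\]
Since $\det(I_n+\epsilon A_1Z_1Z_2^{\star})=\det(I_{\ell}+\epsilon Z_2^{\star}A_1Z_1)=\det\Xi\ne0$, this matrix is nonsingular; and because $\widetilde{S}\in\mathbb{S}_{(\widetilde{X},\widetilde{T})}$ is nonsingular, $\widetilde{W}\widetilde{S}\widetilde{W}^{\star}$ has zero $(1,1)$ block and nonsingular off-diagonal blocks (each a $\star$-variant of $\widetilde{X}\widetilde{T}^{-1}\widetilde{S}\widetilde{X}^{\star}$), hence is nonsingular, so $\widetilde{W}$ is nonsingular too. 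Theorem~\ref{thm:iep2n} then yields a regular $\star$-(anti)-palindromic $\widetilde{Q}(\lambda)$ with standard pair $(\widetilde{X},\widetilde{T})$ and coefficients given by \eqref{a1a0} in terms of $\widetilde{X},\widetilde{T},\widetilde{S}$; since $(X_2,T_2)$ is a common invariant block of $Q$ and $\widetilde{Q}$, the $2n-k$ unchanged eigenpairs are preserved while $\{\lambda_j\}$ is replaced by $\{\widetilde{\lambda}_j\}$, so $\widetilde{Q}$ solves MUP-QP.

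Finally I would convert \eqref{a1a0} into \eqref{a1a0eep}. From the last display, $\widetilde{A}_1=\epsilon(\widetilde{X}\widetilde{T}^{-1}\widetilde{S}\widetilde{X}^{\star})^{-1}=(I_n+\epsilon A_1Z_1Z_2^{\star})^{-1}A_1$, and Sherman--Morrison--Woodbury, $(I_n+\epsilon A_1Z_1Z_2^{\star})^{-1}=I_n-\epsilon A_1Z_1\Xi^{-1}Z_2^{\star}$, gives \eqref{a1tilde}; this also records $\widetilde{A}_1A_1^{-1}=I_n-\epsilon A_1Z_1\Xi^{-1}Z_2^{\star}$ and $A_1^{-1}\widetilde{A}_1=I_n-\epsilon Z_1\Xi^{-1}Z_2^{\star}A_1$. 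A computation parallel to the one above gives $\widetilde{X}\widetilde{T}^{-2}\widetilde{S}\widetilde{X}^{\star}=-A_1^{-1}A_0A_1^{-1}+\Upsilon$, so $\widetilde{A}_0=-\widetilde{A}_1(\Upsilon-A_1^{-1}A_0A_1^{-1})\widetilde{A}_1=(\widetilde{A}_1A_1^{-1})(A_0-A_1\Upsilon A_1)(A_1^{-1}\widetilde{A}_1)$, which is \eqref{a0tilde}. I expect the main difficulty to be bookkeeping rather than a deep step: propagating the $([X_1\ X_2],\diag(T_1,T_2))$ splitting correctly through $XT^{-1}SX^{\star}$ and $XT^{-2}SX^{\star}$, and repackaging the rank-$\ell$ Woodbury correction into the sandwiched form of \eqref{a0tilde}; one must also remember that $S_1$ from \eqref{sss111} lies in $\mathbb{S}_{T_1}$ but generally not in $\mathbb{S}_{(X_1,T_1)}$, since $X_1S_1X_1^{\star}=-X_2S_2X_2^{\star}$ need not vanish.
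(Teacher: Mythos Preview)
Your proposal is correct and follows essentially the same route as the paper: build $\widetilde{S}=\diag(\widetilde{S}_1,S_2)$, verify $\widetilde{S}\in\mathbb{S}_{(\widetilde{X},\widetilde{T})}$ using \eqref{xsxxsx}, identify $\widetilde{X}\widetilde{T}^{-1}\widetilde{S}\widetilde{X}^{\star}=\epsilon A_1^{-1}+Z_1Z_2^{\star}$, use the Sherman--Morrison--Woodbury identity (which the paper re-proves by a direct multiplication), and then invoke Theorem~\ref{thm:iep2n}. Your explicit check that $\widetilde{W}$ is nonsingular via the block anti-triangular structure of $\widetilde{W}\widetilde{S}\widetilde{W}^{\star}$ is in fact a detail the paper leaves implicit when citing Theorem~\ref{thm:iep2n}, so your write-up is slightly more complete there.
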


\begin{proof}
For the original system $Q(\lambda)$, let $S$ be the parameter matrix given by
\begin{align}
S=
\left(\bsmat X_1 & X_2\\ -X_1T_1^{-1} & -X_2T_2^{-1}\esmat^{\star} 
\bsmat 0 & -\epsilon A_1\\ A_1^{\star} & 0\esmat
 \bsmat X_1 & X_2\\ -X_1T_1^{-1} & -X_2T_2^{-1}\esmat\right)^{-1}.
\end{align}
Using the assumptions that  $T_1$ is similar to $T_1^{-\star}$ 
and $\lambda(T_1)\cap\lambda(T_2)=\emptyset$,
we know that $S$ is in the block diagonal form $S=\diag(S_1,S_2)$, where $S_1$ is of order $k$ and given by \eqref{sss111}, and
$S_2$ is of order $2n-k$ and given by $S_2=(\epsilon X_2^{\star}A_1X_2T_2^{-1}-T_2^{-\star}X_2^{\star}A_1^{\star}X_2)^{-1}$.
By Theorem~\ref{thm:sp}, it holds that $S\in\mathbb{S}_{(X,T)}$, i.e, 
$S^{\star}=-\epsilon S$, $S=TST^{\star}$ and $XSX^{\star}=0$.

Now let $\widetilde{S}=\diag(\widetilde{S}_1,S_2)$.
Then $\widetilde{S}$ is nonsingular since both $\widetilde{S}_1$ and $S_2$ are. 
On one hand,
using $\widetilde{S}_1\in\mathbb{S}_{\widetilde{T}_1}$ and $S_2\in\mathbb{S}_{T_2}$ (since $S\in\mathbb{S}_{T}$),
we have $\widetilde{S}\in\mathbb{S}_{\widetilde{T}}$.
On the other hand, using \eqref{xsxxsx}, we have
\begin{align*}
\widetilde{X}\widetilde{S}\widetilde{X}^{\star}
=\widetilde{X}_1\widetilde{S}_1\widetilde{X}_1^{\star}+X_2S_2X_2^{\star}
=X_1S_1X_1^{\star}+X_2S_2X_2^{\star}=0.
\end{align*}
Therefore, $\widetilde{S}\in\mathbb{S}_{(\widetilde{X},\widetilde{T})}$.

Using \eqref{a1tilde}, direct calculations give
\begin{align*}
\widetilde{A}_1(A_1^{-1}+\epsilon Z_1Z_2^{\star})
&=(A_1-\epsilon A_1 Z_1\Xi^{-1}Z_2^{\star}A_1)(A_1^{-1}+\epsilon Z_1Z_2^{\star})\\
&=I+\epsilon A_1Z_1Z_2^{\star}-\epsilon A_1Z_1\Xi^{-1}Z_2^{\star}-
A_1 Z_1\Xi^{-1}Z_2^{\star}A_1Z_1Z_2^{\star}\\
&=I + \epsilon A_1Z_1 \Xi^{-1}(\Xi -I-\epsilon Z_2^{\star}A_1Z_1 )Z_2^{\star}=I.\footnotemark
\end{align*}
\footnotetext{This is actually a proof of Sherman–-Morrison–-Woodbury formula.}
Then it follows that $\widetilde{A}_1$ is nonsingular and
\begin{align}\label{a1tilde2}
\widetilde{A}_1^{-1}&=A_1^{-1}+\epsilon Z_1Z_2^{\star}\notag\\
&=\epsilon XT^{-1}SX^{\star} 
+\epsilon ( \widetilde{X}_1\widetilde{T}_1^{-1}\widetilde{S}_1\widetilde{X}_1^{\star}  -  X_1T_1^{-1}S_1X_1^{\star})\\
&=\epsilon ( \widetilde{X}_1\widetilde{T}_1^{-1}\widetilde{S}_1\widetilde{X}_1^{\star}
+ X_2T_2^{-1}S_2X_2^{\star})=\epsilon \widetilde{X}\widetilde{T}^{-1}\widetilde{S}\widetilde{X}^{\star}\notag, 
\end{align}
where the second equality uses the expression of $A_1$ in \eqref{a1a0}.

Using the expression of $A_0$ in \eqref{a1a0} and \eqref{a0tilde}, we have
\begin{align*}
\widetilde{A}_1^{-1}\widetilde{A}_0\widetilde{A}_1^{-1}&=A_1^{-1}(A_0-A_1 \Upsilon A_1)A_1^{-1}\\
&=A_1^{-1}A_0A_1^{-1} - 
(\widetilde{X}_1\widetilde{T}_1^{-2}\widetilde{S}_1\widetilde{X}_1^{\star}-X_1T_1^{-2}S_1X_1^{\star} )\\
&=-XT^{-2}SX^{\star}-\widetilde{X}_1\widetilde{T}_1^{-2}\widetilde{S}_1\widetilde{X}_1^{\star}+X_1T_1^{-2}S_1X_1^{\star}=- \widetilde{X}\widetilde{T}^{-2}\widetilde{S}\widetilde{X}^{\star},
\end{align*}
which can be rewritten as
\begin{align}\label{a0tilde2}
\tilde{A}_0=-\widetilde{A}_1\widetilde{X}\widetilde{T}^{-2}\widetilde{S}\widetilde{X}^{\star}\widetilde{A}_1.
\end{align}

Notice that $\widetilde{A}_1$ in \eqref{a1tilde2} and $\widetilde{A}_0$ in \eqref{a0tilde2} 
 are actually $A_1$ and $A_0$ in \eqref{a1a0} in terms of 
$X=\widetilde{X}$, $T=\widetilde{T}$ and $S=\widetilde{S}$.
According to Theorem~\ref{thm:iep2n}, $(\widetilde{X},\widetilde{T})$ is a standard pair of $\widetilde{Q}(\lambda)$.
This completes the proof.
\qquad \end{proof}

By Theorem~\ref{thm:eep}, we can construct a solution to MUP-QP as in the following Algorithm~\ref{alg2}.
%
%
%
%
%
%

\begin{algorithm}[htb]\label{alg2}
\KwIn{The original system $Q(\lambda)=\lambda^2A_1^\star+\lambda A_0+A_1$, some of its eigenpairs in $(X_1,T_1)\in\mathbb{C}^{n\times k}\times \mathbb{C}^{k\times k}$ and $\widetilde{T}_1=\diag(\tilde{\lambda}_1, \tilde{\lambda}_2,\dots, \tilde{\lambda}_k)$.}

\KwOut{$\widetilde{A}_1$ and $\widetilde{A}_0$.}

\caption{Solving the MUP-QP}

\BlankLine

\nl Compute $S_1$ as in \eqref{sss111} and the $\star$-factorization 
$X_1S_1X_1^{\star}=Y\Delta Y^{\star}$.

\nl Construct a nonsingular $\widetilde{S}_1\in\mathbb{S}_{\widetilde{T}_1}$.

\nl Compute the $\star$-factorization  $\widetilde{S}_1=\widetilde{Y}\widetilde{\Delta}\widetilde{Y}^{\star}$.

\nl Solve $\Psi \widetilde{\Delta} \Psi^{\star}=\Delta$ for $\Psi$, 
which can be obtained similar as \eqref{psi}.

\nl Compute $\widetilde{X}_1=Y\Psi\widetilde{Y}^{-1}$.

\nl Compute the rank factorization $\widetilde{X}_1\widetilde{T}_1^{-1}\widetilde{S}_1\widetilde{X}_1^{\star}  -  X_1T_1^{-1}S_1X_1^{\star}=Z_1Z_2^{\star}$.

\nl Compute $\Xi$ and $\Upsilon$ as in Theorem~\ref{thm:eep}.

\nl If $\Xi$ is nonsingular, compute $\widetilde{A}_1$ and $\widetilde{A}_0$ as in \eqref{a1a0eep}. 
\end{algorithm}

In some applications, the eigenvector matrix $\widetilde{X}_1$ is prescribed. In this case, if there exists a nonsingular $\widetilde{S}_1\in\mathbb{S}_{\widetilde{T}_1}$ such that it holds  \eqref{xsxxsx}
and $\Xi:=I_{\ell}+\epsilon Z_2^{\star}A_1Z_1$ is nonsingular,
where $Z_1, Z_2\in\mathbb{C}^{n\times \ell}$ are determined by the rank factorization
$\widetilde{X}_1\widetilde{T}_1^{-1}\widetilde{S}_1\widetilde{X}_1^{\star}  -  X_1T_1^{-1}S_1X_1^{\star}=Z_1Z_2^{\star}$,
then a regular solution can be given by $\widetilde{Q}(\lambda)$ with $A_1$, $A_0$ given by \eqref{a1a0eep}.

We need to solve a nonsingular $S_1\in\mathbb{S}_{\widetilde{T}_1}$ satisfying \eqref{xsxxsx}.
If $\widetilde{T}_1$ is of the PJCF, by the structure of the parameter matrix,
we can get the parameter expression of $\widetilde{S}_1$.
Noticing that \eqref{xsxxsx} is a linear system of equations,
we can solve the parameters of $\widetilde{S}_1$ with ease.
However,  whether $\widetilde{S}_1$ is nonsingular or not, determined by those parameters, depends on.
In \cite{cai2009quadratic}, under the assumption that all eigenvalues are simple,
a necessary and sufficient condition is given for the existence of nonsingular $S$,
and a numerical method is proposed to find a nonsingular one.
Here, assuming that all eigenvalues of $\widetilde{T}_1$ is simple
and using the parametric expression of $\widetilde{S}_1$,
we can follow the approach in \cite{cai2009quadratic} to find a nonsingular $\widetilde{S}_1\in\mathbb{S}_{\widetilde{T}_1}$
when there exists.

It is also worth mentioning here that in the above theorem
the number of updated eigenpairs $k$ is not required to be no more than $n$,
and the range space spanned by  the column vectors of $\widetilde{X}_1$ is not necessarily in that of $X_1$. 

\section{Numerical Examples}\label{example}
In this section, we will present some examples to illustrate the performance of Algorithms~\ref{alg1} and \ref{alg2} for solving the IEP-QP$(k)$-A and MUP-QP, respectively, on four different types of palindromic systems as shown in \eqref{4pal}.

\textbf{Example 1.}\quad The following are some examples illustrating the performance of Algorithm~\ref{alg1} for solving the IEP-QP$(k)$-A. Setting
\[
X_1=\begin{bmatrix}1 &i &0 &0\\2 &2i &1 &0\\1 &1 &i &i\\1 &-1 &1 &-1\end{bmatrix},\qquad T_1=\diag\left(1+i,\frac{1}{1+i},2+3i,\frac{1}{2+3i}\right),
\]
Algorithm~\ref{alg1} computes a $\top$-palindromic system $Q(\lambda)=\lambda^2 A_1^{\top} +\lambda A_0 + A_1$ with
\begin{align*}
A_1&=\begin{bmatrix}4.3000 - 3.1500i  &-1.4500 + 1.3500i  &-1.1750 - 0.4750i   &0.2750 - 0.5750i\\
  -1.4500 + 2.3500i   &0.5000 - 1.0000i   &0.6500 + 0.3000i  &-0.2000 + 0.3500i\\
  -1.4250 - 1.7250i   &0.6500 + 0.8000i  &-0.1250 + 0.2500i  &-0.2500 - 0.1250i\\
  -0.4750 - 0.8250i   &0.3000 + 0.3500i  &-0.2500 - 0.1250i   &0.1250 - 0.2500i\end{bmatrix},\\
A_0&=\begin{bmatrix} -8.6000 + 6.3000i   &2.9000 - 3.7000i   &2.6000 + 2.2000i   &0.2000 + 1.4000i\\
   2.9000 - 3.7000i  &-1.0000 + 2.0000i  &-1.3000 - 1.1000i  &-0.1000 - 0.7000i\\
   2.6000 + 2.2000i  &-1.3000 - 1.1000i   &0.2500 - 0.5000i   &0.5000 + 0.2500i\\
   0.2000 + 1.4000i  &-0.1000 - 0.7000i   &0.5000 + 0.2500i  &-0.2500 + 0.5000i\end{bmatrix},
\end{align*} 
which satisfies
\[
\|A_0-A_0^\top\|_2=6.7641\times 10^{-15},\quad \|A_1^\top X_1T_1^2+A_0X_1T_1+A_1X_1\|_2=1.6859\times 10^{-14},
\]
and also a $\top$-anti-palindromic system $Q(\lambda)=\lambda^2 A_1^{\top} +\lambda A_0 - A_1$ with
\begin{align*}
A_1&=\begin{bmatrix}-1.5000 - 0.6250i   &0.9750 + 0.5750i  &-0.3375 + 0.2625i  &-0.3625 - 0.0375i\\
   0.2750 - 0.3250i  &-0.2500 + 0.0000i   &0.0750 - 0.1000i   &0.1500 - 0.0750i\\
   0.3375 + 0.7375i  &-0.0750 - 0.4000i   &0.0625 - 0.0000i   &0.0000 + 0.0625i\\
   0.3625 + 0.0375i  &-0.1500 + 0.0750i  &-0.0000 + 0.0625i  &-0.0625 + 0.0000i\end{bmatrix},\\
A_0&=\begin{bmatrix} 0.0000 + 0.0000i   &0.7000 + 0.9000i  &-0.6750 - 0.4750i  &-0.7250 - 0.0750i\\
  -0.7000 - 0.9000i   &0.0000 + 0.0000i  &0.1500 + 0.3000i   &0.3000 - 0.1500i\\
   0.6750 + 0.4750i  &-0.1500 - 0.3000i   &0.0000 + 0.0000i   &0.0000 - 0.0000i\\
   0.7250 + 0.0750i  &-0.3000 + 0.1500i   &0.0000 - 0.0000i  &0.0000 + 0.0000i\end{bmatrix},
\end{align*}
which satisfies
\[
\|A_0+A_0^\top\|_2=7.3444\times 10^{-15},\quad \|A_1^\top X_1T_1^2+A_0X_1T_1-A_1X_1\|_2=6.5052\times 10^{-15}.
\] 
Setting
\[
X_1=\begin{bmatrix}1 &i &0 &0\\2 &2i &1 &0\\1 &1 &i &i\\1 &-1 &1 &-1\end{bmatrix},\qquad T_1=\diag\left(1+i,\frac{1}{1-i},2+3i,\frac{1}{2-3i}\right),
\]
Algorithm~\ref{alg1} computes a $\ast$-palindromic system $Q(\lambda)=\lambda^2 A_1^* +\lambda A_0 + A_1$ with
\begin{align*}
A_1&=\begin{bmatrix}5.1000 + 0.7500i  &-2.6500 - 0.0500i   &0.6750 - 1.2250i   &1.0250 + 0.0750i\\
  -2.1500 - 1.4500i   &1.0000 + 0.5000i  &-0.4000 + 0.5500i  &-0.4500 - 0.1000i\\
  -0.3250 + 2.2250i   &0.1000 - 1.0500i   &0.2500 + 0.1250i  &-0.1250 + 0.2500i\\
   0.0250 + 0.9250i   &0.0500 - 0.4000i   &0.1250 - 0.2500i   &0.2500 + 0.1250i\\\end{bmatrix},\\
A_0&=\begin{bmatrix} -10.2000 - 0.0000i   &4.8000 - 1.4000i  &-0.3500 + 3.4500i  &-1.0500 + 0.8500i\\
   4.8000 + 1.4000i  &-2.0000 + 0.0000i   &0.3000 - 1.6000i   &0.4000 - 0.3000i\\
  -0.3500 - 3.4500i   &0.3000 + 1.6000i  &-0.5000 - 0.0000i  &-0.0000 - 0.5000i\\
  -1.0500 - 0.8500i   &0.4000 + 0.3000i   &0.0000 + 0.5000i  &-0.5000 + 0.0000i\end{bmatrix},
\end{align*} 
which satisfies
\[
\|A_0-A_0^*\|_2=2.0551\times 10^{-14},\quad \|A_1^* X_1T_1^2+A_0X_1T_1+A_1X_1\|_2=4.3688\times 10^{-14},
\]
and also a $\ast$-anti-palindromic system $Q(\lambda)=\lambda^2 A_1^* +\lambda A_0 - A_1$ with
\begin{align*}
A_1&=\begin{bmatrix} -2.0000 + 2.0500i   &0.1500 - 0.4500i   &0.1750 - 0.2250i   &0.0250 - 0.4250i\\
   0.8500 + 0.5500i   &0.0000 - 0.5000i   &0.1000 + 0.0500i   &0.0500 + 0.4000i\\
   0.0750 - 1.4750i  &-0.1000 + 0.5500i   &0.0000 - 0.1250i   &0.1250 + 0.0000i\\
   0.7250 - 0.6750i  &-0.5500 + 0.4000i  &-0.1250 - 0.0000i   &0.0000 - 0.1250i\end{bmatrix},\\
A_0&=\begin{bmatrix}  0.0000 + 4.1000i  &-0.7000 + 0.1000i   &0.1000 - 1.7000i  &-0.7000 - 1.1000i\\
   0.7000 + 0.1000i   &0.0000 - 1.0000i   &0.2000 + 0.6000i   &0.6000 + 0.8000i\\
  -0.1000 - 1.7000i  &-0.2000 + 0.6000i   &0.0000 - 0.2500i   &0.2500 - 0.0000i\\
   0.7000 - 1.1000i  &-0.6000 + 0.8000i  &-0.2500 + 0.0000i   &0.0000 - 0.2500i\end{bmatrix},
\end{align*}
which satisfies
\[
\|A_0+A_0^*\|_2=7.5570\times 10^{-15},\quad \|A_1^* X_1T_1^2+A_0X_1T_1-A_1X_1\|_2=7.9397\times 10^{-15},
\]

\medskip
\textbf{Example 2.}\quad The following are some examples illustrating the performance of Algorithm~\ref{alg2} for solving the MUP-QP.

Let
\[
A_1=\begin{bmatrix}2 &1+2i &1-2i\\ 1  &-1+i &1+i\\ 1-2i &1+i &1\end{bmatrix},\quad
A_0=\begin{bmatrix}4 &-3+i &5\\ -3+i &1 &-1\\5 &-1 &-1\end{bmatrix}.
\]
Algorithm~\ref{alg2} updates the original $\top$-palindromic system $Q(\lambda)=\lambda^2 A_1^\top +\lambda A_0 + A_1$ to a new $\top$-palindromic system $\widetilde{Q}(\lambda)=\lambda^2 \widetilde{A}_1^\top+\lambda \widetilde{A}_0+\widetilde{A}_1$, such that two eigenvalues $\lambda_1=-4.0685 +10.3032i$ and $\lambda_2=-0.0332 - 0.0840i=1/{\lambda}_1$ are replaced by $\tilde{\lambda}_1=-6+9i, \tilde{\lambda}_2=1/(-6+9i)$, while the remaining eigenpairs are kept unchanged. The updated $\top$-palindromic system is given by
\begin{align*}
\widetilde{A}_1&=\begin{bmatrix}6.3172 - 1.0938i   &1.1789 + 0.7591i   &1.2710 - 1.0174i\\
  -2.1052 + 3.9269i  &-0.1964 + 0.4151i  &-0.0471 + 0.7540i\\
   2.2610 - 5.5507i   &1.2274 + 1.7493i   &1.6496 - 0.1509i\end{bmatrix},\\
\widetilde{A}_0&=\begin{bmatrix}-10.7835 -10.6064i   &6.0838 + 8.1298i   &1.6969 - 6.9283i\\
   6.0838 + 8.1298i  &-2.8344 - 3.3986i   &0.1363 + 3.6357i\\
   1.6969 - 6.9283i   &0.1363 + 3.6357i  &-1.2954 - 2.9336i\end{bmatrix},
\end{align*}
and satisfies
\begin{gather*}
\|\widetilde{A}_0-\widetilde{A}_0^\top\|_2=3.8603\times 10^{-13},\quad \|\widetilde{A}_1^\top \widetilde{X}_1\widetilde{T}_1^2+\widetilde{A}_0\widetilde{X}_1\widetilde{T}_1+\widetilde{A}_1\widetilde{X}_1\|_2=7.8410\times 10^{-12},\\
\|\widetilde{A}_1^\top X_2T_2^2+\widetilde{A}_0X_2T_2+\widetilde{A}_1X_2\|_2=6.6798\times 10^{-13},
\end{gather*}
where $T_1=\diag(\tilde{\lambda}_1, \tilde{\lambda}_2)$, and $\widetilde{X}_1$ are corresponding eigenvectors, while $(X_2,T_2)$ are the remaining eigenpairs of the original system $Q(\lambda)$ to be kept unchanged.

Let
\[
A_1=\begin{bmatrix}2 &1 &1\\ 1  &-1 &1\\ 1 &1 &1\end{bmatrix},\quad
A_0=\begin{bmatrix}0 &-3 &5\\3 &0 &-1\\-5 &1 &0\end{bmatrix}.
\]
Algorithm~\ref{alg2} updates the original $\top$-anti-palindromic system $Q(\lambda)=\lambda^2 A_1^\top +\lambda A_0 - A_1$ to a new $\top$-anti-palindromic system $\widetilde{Q}(\lambda)=\lambda^2 \widetilde{A}_1^\top+\lambda \widetilde{A}_0-\widetilde{A}_1$, such that two eigenvalues $\lambda_1=4.2361$ and $\lambda_2=0.2361=1/{\lambda}_1$ are replaced by $\tilde{\lambda}_1=4, \tilde{\lambda}_2=1/4$, while the remaining eigenpairs are kept unchanged. The updated $\top$-anti-palindromic system is given by
\begin{align*}
\widetilde{A}_1=\begin{bmatrix}-2.3558    &2.8351   &-0.6066\\
    9.3284   &-4.9928    &4.8786\\
  -11.4287    &7.0430   &-4.9287\\
\end{bmatrix},\quad
\widetilde{A}_0=\begin{bmatrix} 0.0000  &-0.0961    &0.1602\\
    0.0961   &0.0000   &-0.0320\\
   -0.1602    &0.0320   &0.0000\end{bmatrix},
\end{align*}
and satisfies
\begin{gather*}
\|\widetilde{A}_0+\widetilde{A}_0^\top\|_2=1.3555\times 10^{-12},\quad \|\widetilde{A}_1^\top \widetilde{X}_1\widetilde{T}_1^2+\widetilde{A}_0\widetilde{X}_1\widetilde{T}_1-\widetilde{A}_1\widetilde{X}_1\|_2=4.5452\times 10^{-13},\\
\|\widetilde{A}_1^\top X_2T_2^2+\widetilde{A}_0X_2T_2-\widetilde{A}_1X_2\|_2=1.2552\times 10^{-12},
\end{gather*}

Let
\[
A_1=\begin{bmatrix}2-5i &1+2i &1-2i\\ 1+2i  &-1+i &1+i\\ 1-2i &1+i &1+3i\end{bmatrix},\quad
A_0=\begin{bmatrix}4 &-3 &5\\ -3 &1 &-1\\5 &-1 &-1\end{bmatrix}.
\]
Algorithm~\ref{alg2} updates the original $\ast$-palindromic system $Q(\lambda)=\lambda^2 A_1^* +\lambda A_0 + A_1$ to a new $\ast$-palindromic system $\widetilde{Q}(\lambda)=\lambda^2 \widetilde{A}_1^*+\lambda \widetilde{A}_0+\widetilde{A}_1$, such that two eigenvalues $\lambda_1=0.8745 + 0.6115i$ and $\lambda_2=0.7680 + 0.5371i=1/\bar{\lambda}_1$ are replaced by $\tilde{\lambda}_1=1+i, \tilde{\lambda}_2=1/(1-i)$, while the remaining eigenpairs are kept unchanged. The updated $\ast$-palindromic system is given by
\begin{align*}
\widetilde{A}_1&=\begin{bmatrix}6.7472 - 9.3739i   &2.5735 + 4.0973i   &0.7229 - 5.6862i\\
   3.0311 + 3.2620i  &-1.5796 + 1.6936i   &2.5406 + 0.6804i\\
   3.1161 - 2.9941i   &1.5673 + 2.0398i   &0.4722 + 2.4112i\end{bmatrix},\\
\widetilde{A}_0&=\begin{bmatrix}16.4611 - 0.0000i  &-2.3817 + 1.4735i   &9.3482 - 4.0036i\\
  -2.3817 - 1.4735i   &2.6838 - 0.0000i  &-1.0836 - 0.5791i\\
   9.3482 + 4.0036i  &-1.0836 + 0.5791i  &-0.6201 - 0.0000i\end{bmatrix},
\end{align*}
and satisfies
\begin{gather*}
\|\widetilde{A}_0-\widetilde{A}_0^*\|_2=6.3131\times 10^{-13},\quad \|\widetilde{A}_1^* \widetilde{X}_1\widetilde{T}_1^2+\widetilde{A}_0\widetilde{X}_1\widetilde{T}_1+\widetilde{A}_1\widetilde{X}_1\|_2=1.3634\times 10^{-13},\\
\|\widetilde{A}_1^* X_2T_2^2+\widetilde{A}_0X_2T_2+\widetilde{A}_1X_2\|_2=7.9817\times 10^{-13}.
\end{gather*}

Let
\[
A_1=\begin{bmatrix}2-5i &1+2i &1-2i\\ 1+2i  &-1+i &1+i\\ 1-2i &1+i &1+3i\end{bmatrix},\quad
A_0=\begin{bmatrix}0 &-3 &5\\ -3 &0 &-1\\5 &-1 &0\end{bmatrix}.
\]
Algorithm~\ref{alg2} updates the original $\ast$-anti-palindromic system $Q(\lambda)=\lambda^2 A_1^* +\lambda A_0 - A_1$ to a new $\ast$-palindromic system $\widetilde{Q}(\lambda)=\lambda^2 \widetilde{A}_1^*+\lambda \widetilde{A}_0-\widetilde{A}_1$, such that two eigenvalues $\lambda_1=0.8195 - 2.4199i$ and $\lambda_2=0.1255 - 0.3707i=1/\bar{\lambda}_1$ are replaced by $\tilde{\lambda}_1=1-2.5i, \tilde{\lambda}_2=1/(1+2.5i)$, while the remaining eigenpairs are kept unchanged. The updated $\ast$-anti-palindromic system is given by
\begin{align*}
\widetilde{A}_1&=\begin{bmatrix} 2.1491 - 6.5859i  &-0.6259 + 4.1774i   &0.5897 - 5.0508i\\
   0.6137 + 0.3797i  &-0.5406 + 1.0722i   &0.9749 + 0.1034i\\
  -0.2765 + 0.4966i   &0.7322 + 0.2886i   &0.0448 + 4.3964i\end{bmatrix},\\
\widetilde{A}_0&=\begin{bmatrix} 0.0000 + 0.3634i   &1.5570 + 2.9811i  &-0.0006 + 0.1029i\\
  -1.5570 + 2.9811i   &0.0000 - 1.9361i  &-1.9504 + 1.2902i\\
   0.0006 + 0.1029i   &1.9504 + 1.2902i   &0.0000 + 0.5061i\end{bmatrix},
\end{align*}
and satisfies
\begin{gather*}
\|\widetilde{A}_0+\widetilde{A}_0^*\|_2=5.8613\times 10^{-14},\quad \|\widetilde{A}_1^* \widetilde{X}_1\widetilde{T}_1^2+\widetilde{A}_0\widetilde{X}_1\widetilde{T}_1-\widetilde{A}_1\widetilde{X}_1\|_2=4.6765\times 10^{-14},\\
\|\widetilde{A}_1^* X_2T_2^2+\widetilde{A}_0X_2T_2-\widetilde{A}_1X_2\|_2=8.4643\times 10^{-14}.
\end{gather*}

\section{Conclusion}\label{conclusion}
In this paper we consider some inverse eigenvalue problems of quadratic palindromic systems, namely, 
the inverse eigenvalue problem with $k$ prescribed eigenpairs (IEP-QP($k$)) and the model updating problem with no-spillover (MUP-QP). Solutions to the IEP-QP($k$) are given uniformly without distinguishing $k\le n$ and $k>n$. And for the IEP-QP($2n$), we show under what condition the coefficient matrices of the solutions 
can be jointly block diagonalized. We also give parametric solutions to the MUP-QP, without assuming that the number of the unwanted eigenvalues/eigenpairs is no more than $n$ and that the space spanned by the column vectors of $\widetilde{X}_1$ is in that of $X_1$. 

\bibliographystyle{abbrv} 
%

%

\end{document}